\newtheorem{Theorem}{Theorem}[section]
\newtheorem{Lemma}[Theorem]{Lemma}
\newtheorem{Conjecture}[Theorem]{Conjecture}
\theoremstyle{definition}
\newtheorem{Definition}[Theorem]{Definition}
\newtheorem{Example}[Theorem]{Example}
\theoremstyle{remark}
\newtheorem{Remark}[Theorem]{Remark}
\newcommand{\cB}{\mathcal{B}}
\newcommand{\cC}{\mathcal{C}}
\newcommand{\cF}{\mathcal{F}}
\newcommand{\cH}{\mathcal{H}}
\newcommand{\cI}{\mathcal{I}}
\newcommand{\cM}{\mathcal{M}}
\newcommand{\cN}{\mathcal{N}}
\newcommand{\A}{\mathbb{A}}
\newcommand{\R}{\mathbb{R}}
\newcommand{\B}{\mathbb{B}}
\newcommand{\Z}{\mathbb{Z}}
\newcommand{\M}{M}
\let\S=\undefined\newcommand{\S}{\mathbb{S}}
\newcommand{\N}{\mathbb{N}}
\newcommand{\F}{\mathcal{F}}
\newcommand{\e}{\epsilon}
\newcommand{\wtilde}[1]{\widetilde{#1} } 
\newcommand{\Prod}[2]{\ensuremath{\langle #1,#2 \rangle}}
\newcommand\res[2]{{
  \left.\kern-\nulldelimiterspace 
  #1 
  \vphantom{\big|} 
  \right|_{#2} 
  }}
\newcommand{\mres}{\mathbin{\vrule height 1.6ex depth 0pt width
0.13ex\vrule height 0.13ex depth 0pt width 1.3ex}}
\let\P=\undefined\newcommand{\P}{\mathbf{P}}
\let\M=\undefined\newcommand{\M}{\mathbf{M}}
\let\a=\undefined \newcommand{\a}{\alpha}
\let\d=\undefined \newcommand{\d}{\delta}
\let\p=\undefined \newcommand{\p}{\partial}
\let\o=\undefined \newcommand{\o}{\omega}
\let\O=\undefined \newcommand{\O}{\Omega}
\let\H=\undefined \newcommand{\H}{\mathcal{H}}
\let\dis\displaystyle
\newcommand{\leqnomode}{\tagsleft@true}
\newcommand{\reqnomode}{\tagsleft@false}
\DeclarePairedDelimiter\abs{\lvert}{\rvert}%
\DeclarePairedDelimiter\norm{\lVert}{\rVert}%
\let\oldabs\abs
\def\abs{\@ifstar{\oldabs}{\oldabs*}}
\let\oldnorm\norm
\def\norm{\@ifstar{\oldnorm}{\oldnorm*}}
\let\div=\undefined \DeclareMathOperator{\div}{div}
\DeclareMathOperator{\diam}{diam}
\DeclareMathOperator{\Vol}{Vol}
\DeclareMathOperator{\spt}{spt}
\title[Singular isoperimetric regions]{Existence of singular isoperimetric regions}
\author{GONGPING NIU}
\address{GN: Department of Mathematics, University of California, San Diego, 9500 Gilman Dr, La Jolla, CA 92093, USA}
\email{gniu@ucsd.edu}
\begin{document}
    \begin{abstract}
        It is well known that isoperimetric regions in a smooth compact $(n+1)$-manifold are smooth, up to a closed set of codimension at most $6$. In this note, we first construct an $8$-dimensional compact smooth manifold whose unique isoperimetric region with half volume that of the manifold exhibits two isolated singularities. And then, for $n\geq 7$, using Smale's construction of singular homological area minimizers for higher dimensions, we construct a Riemannian manifold such that the unique isoperimetric region of half volume, with singular set the submanifold $\S^{n-7}$.
\end{abstract}

\maketitle


\section{Introduction}
    Given an $(n+1)$-dimensional smooth compact Riemannian manifold $(M,g)$, we will consider the isoperimetric problem, that is, 
    given any positive number $0<t<|M|_g$, we look for a solution to the following constrained variational problem
    \begin{align}
        \cI(M, g, t):=\inf\{\P_g(\Omega)\, :\,\Omega \in \cC(M,t)\} \label{minimizer} \,,
    \end{align}
    where $\cC(M,t)$ is the class of sets of perimeters with enclosed volume $t$ (for a more precise definition see section~\ref{Section: Preliminaries}.) To understand the structure of the boundary of $\O$, we say that a point on the boundary is regular if it is locally a smooth hypersurface. We denote it as $Reg(\p \O)\subset \p\O$, and for the complement, we call it the set of singularities in $\p\O$, and denote it by $Sing(\p\O)=\p\O \setminus Reg(\p \O)$. It is well known that minimizers exist and that they are regular outside of a closed set of codimension $6$, which is discrete when $(n+1)=8$ (see Gonzalez, Massari, and Tamanini \cite{gonzalez1983regularity}). 
    
    A natural question is whether the bound is optimal or not. In the case of area-minimizing integral currents, the question is answered in the positive by the Simons' cone in $\R^8$: ${\bf{C}}:=\{ (x,y) : |x|=|y| \ \ \text{for} \ \ x,y\in \R^4  \}.$
    However, minimizers of the problem above in $\R^n$ are euclidean balls and hence smooth for every $n\in \N$. Therefore, to construct a singular minimizer in dimension 8, we need to construct a manifold that is not a space form (see more explanation in section~\ref{Section: Preliminaries}.) On the other hand, Smale constructed in \cite{smale1999singular} a compact $8$-manifold admitting a unique area-minimizing current with two singular points. Following his construction of local minimizing neighborhood (see section~\ref{subection: Smale's result}), we can prove the following theorem for the isoperimetric region problem:

    \begin{Theorem}[Singular isoperimetric region in 8-manifold]\label{Theorem: main theorem}
    There exists a smooth closed Riemannian $8$-manifold $(M,g)$ whose unique isoperimetric region with volume $|M|_g/2$ has two isolated singularities. The unique tangent cone at each singular point is a Simons' cone.
    \end{Theorem}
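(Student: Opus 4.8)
I want to adapt Smale's construction of a compact $8$-manifold carrying a unique singular area-minimizing current to the setting of the isoperimetric problem at volume $|M|_g/2$. The strategy splits into three parts: first, build the ambient manifold and a natural candidate hypersurface with two Simons-cone singularities; second, show that this hypersurface is actually the boundary of a perimeter-minimizing region among competitors of volume $|M|_g/2$; and third, prove uniqueness and identify the tangent cones.

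\medskip

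\emph{Step 1: The model and the ambient manifold.} Recall Smale's local picture: there is a neighborhood (diffeomorphic to a ball) with a metric in which the truncated Simons cone $\mathbf{C}$ over $S^3\times S^3$ is strictly area-minimizing and isolated. The plan is to double this construction: take two such singular points $p_1,p_2$, glue the corresponding Smale neighborhoods into a closed $8$-manifold $M$ with a metric $g$ so that a fixed hypersurface $\Sigma\subset M$ — coinciding with the Simons cone near each $p_i$ and smooth elsewhere — separates $M$ into two pieces $\Omega$ and $M\setminus\Omega$ of equal volume. One should arrange the construction with an isometric involution $\tau$ of $(M,g)$ exchanging the two halves and fixing $\Sigma$; this symmetry will both force the half-volume constraint to be compatible with $\Sigma$ and later help with uniqueness. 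One must check that $\Sigma$ is stationary for the area functional with respect to a volume constraint, i.e. has constant mean curvature (here mean curvature zero by the reflection symmetry) — this is where the metric near $\Sigma$ away from the singular points must be chosen carefully.

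\medskip

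\emph{Step 2: Minimality.} This is the heart of the argument and the main obstacle. For the pure area-minimizing (current) problem Smale produces a calibration-type or direct comparison argument locally; globally one needs $\Sigma$ to beat every competing region of volume $|M|_g/2$, not merely every homologous cycle. The plan is: (a) use compactness and lower semicontinuity of perimeter to get \emph{some} isoperimetric region $\Omega_0$ at volume $|M|_g/2$, which by \cite{gonzalez1983regularity} is smooth away from a finite set in dimension $8$; (b) show that its boundary $\partial\Omega_0$ is a constant-mean-curvature hypersurface whose mean curvature is determined by the Lagrange multiplier, and then compare $\mathcal{I}(M,g,|M|_g/2)$ against $\P_g(\Sigma)$ to force $\P_g(\Omega_0)=\P_g(\Sigma)$; (c) localize: inside each Smale neighborhood the minimality property guarantees $\partial\Omega_0$ agrees with $\mathbf{C}$ (or is empty there), and outside one uses that $\Sigma$ is the unique CMC hypersurface in its isotopy class for the chosen metric. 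The delicate point is step (b)–(c): a priori the isoperimetric region need not be symmetric, need not pass near $p_i$ at all, and its mean-curvature constant is not known in advance. I expect to handle this by designing the metric so that (i) any region of volume $|M|_g/2$ has perimeter at least $\P_g(\Sigma)$ with equality only for $\Sigma$ — e.g. by making the "neck" region around $\Sigma$ geometrically thin and everything else large — and (ii) the mean-curvature constant is pinned to zero by the reflection symmetry together with a uniqueness statement for CMC hypersurfaces. In effect the metric should be a "dumbbell-type" construction grafted onto Smale's local model.

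\medskip

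\emph{Step 3: Uniqueness and tangent cones.} Given that any minimizer $\Omega_0$ has $\partial\Omega_0$ a zero-mean-curvature hypersurface agreeing with $\Sigma$ away from $p_1,p_2$ and with $\mathbf{C}$ near each $p_i$, unique continuation for the minimal surface equation (applied on the smooth part) forces $\partial\Omega_0=\Sigma$ and hence $\Omega_0=\Omega$ up to the involution $\tau$; if the construction is arranged so that $\Omega$ and $M\setminus\Omega$ are not isometric, or by fixing an orientation, one gets honest uniqueness. The tangent cone at each $p_i$ is then the tangent cone of $\mathbf{C}$, which is the Simons cone, by construction of the Smale neighborhood. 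Finally I should verify that $Sing(\partial\Omega)=\{p_1,p_2\}$ exactly — regularity away from $p_i$ is built in, and at $p_i$ the density equals that of the Simons cone, which is $>1$, so these points are genuinely singular; this uses the standard dimension-reduction and density-gap facts for stationary (indeed CMC) hypersurfaces in dimension $8$ cited in the introduction.
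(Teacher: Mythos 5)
Your plan stalls exactly at the point the whole construction has to be designed around: the Lagrange multiplier. In your setup the entire boundary of the candidate region is the single separating hypersurface $\Sigma$, so it must simultaneously enclose exactly half the volume and beat every half-volume competitor, whose boundary is a CMC hypersurface with an \emph{unknown} constant $\lambda$. Smale's theorem only gives that $\Sigma$ is uniquely homologically area-minimizing in a small tubular neighborhood $U_\delta$; it says nothing about volume-constrained competitors, which need not lie in $U_\delta$ and need not have $\lambda=0$. Your two devices for pinning $\lambda=0$ do not work as stated: an isometric involution $\tau$ exchanging the two halves does not force a given minimizer to be $\tau$-invariant (it only shows its image is another minimizer), and ``uniqueness of CMC hypersurfaces in the isotopy class'' is not an available tool (and is generally false) in this singular setting; likewise ``make the neck thin and everything else large so that every half-volume region has perimeter at least $\P_g(\Sigma)$, with equality only for $\Sigma$'' is precisely the statement to be proved, not a construction. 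Step 3 is also internally inconsistent: you want an involution swapping $\Omega$ and $M\setminus\Omega$ and at the same time want the two halves non-isometric in order to claim uniqueness. (A smaller point: no doubling of two local models is needed; $\Sigma=(\mathbf{C}\times\R)\cap\S^8$ already has exactly two Simons-cone singularities.)

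The paper removes the multiplier problem structurally rather than analytically: it smooths the boundary of Smale's neighborhood to get $V$ with $\partial V=\Gamma_+\cup\Gamma_-$ diffeomorphic to each other (this uses the symmetry of the Simons cone via the Hardt--Simon foliation), enlarges the metric near $\Gamma_\pm$ to a product, and glues on a long cylinder $\Gamma\times[0,R]$, so that $M(R)\cong\Gamma\times\S^1$ and the natural competitor has boundary $\Sigma\cup(\Gamma\times\{t_0\})$. The slice can slide in $t_0$ to absorb the volume constraint, so the comparison reduces to homological area minimization of each boundary component: any component meeting $V$ has mass $\ge \M(\Sigma)$, any component in the tube homologous to $[\Gamma]$ has mass $\ge \M(\Gamma)$, and the explicit competitor achieves $\M(\Sigma)+\M(\Gamma)$, forcing equality and identifying $\partial\Omega_R=\Sigma\cup(\Gamma\times\{t_0\})$. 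Making this rigorous requires the uniform mean-curvature bound for isoperimetric boundaries in the presence of singular sets (the paper's Lemma~4.3, extending \cite[Lemma C.1]{chodosh2022riemannian}), the monotonicity formula to bound diameters and the number of components, and the homology bookkeeping in the tube for $R$ large --- none of which your outline supplies. If you insist on the single-separating-hypersurface ``dumbbell'' picture, you would need a genuinely new argument to exclude nonzero mean curvature and to prevent the minimizer's boundary from escaping the neighborhood where Smale's minimization holds; as written, that is the missing proof, not a detail.
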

    
 \newpage 
    
    \begin{Remark}\label{Remark: for the main theorem}
    \hfill
    
    \begin{itemize}
        \item In \cite{smale1999singular}, we can prescribe the singularity for the homological area minimizers to be any strictly stable, strictly minimizing (tangent) cone with an isolated singularity (see section~\ref{Section: Preliminaries} for the definitions), but in our construction, for the technical reason of the setting, we need in addition to assume that the unique (up to scaling) smooth area minimizing hypersurface on one side of the cone (see \cite[Theorem 2.1]{hardt1985area}) is diffeomorphic to the one on the other side (e.g., Simons' cone). However, it is promising that we can eliminate this requirement by modifying the construction. 
        As far as the author's knowledge, these are the first examples of isoperimetric regions with singularities.     
        \item The metric in the above theorem is only $C^\infty$. It is an open question whether the same result would hold for an analytic metric.
    \end{itemize}
    \end{Remark}
   
   Another natural problem is whether we have examples of singular isoperimetric regions for higher dimensions. By slightly modifying the proof in Theorem \ref{Theorem: main theorem}, we can generalize it to higher dimensions.

    \begin{Theorem}[Singular isoperimetric regions in higher dimensional manifolds] \label{Theorem: main theorem 2}
        For any integers $n\geq 7,  p\in[3, \frac{n-1}{2}]$, there exists a closed smooth $(n+1)$-dimensional Riemannian manifold $(M,g)$ such that whose unique isoperimetric region $\O$ with volume half has the singular part a closed submanifold diffeomorphic to $\S^{n-2p-1}$ (denote $\S^0$ as a point). Denote the Simons' cone in $\R^{2p+2}$, 
        $$ {\bf C}^{p,p}:= \{ (x,y)\in \R^{p+1}\times\R^{p+1}: \ |x| = |y| \}.$$
        
        Near $Sing(\p \O)$, $\p \O$ looks like $\S^{n-2p-1} \times {\bf C}^{p,p}$, i.e., there exists $\sigma>0$, and an isometric map $\Phi$ from the tubular neighborhood of the singular set $\cN(\sigma):= \{ x\in M : d_M(x,Sing(\Sigma)) <\sigma \}$ to $(\B^{2p+2}(\sigma) \times \S^{n-2p-1}, g_{eucl} + g_S)$, here $g_{eucl}, g_S$ are the standard metrics in $\R^{2p+2}$ and $\S^{n-2p-1}$ respectively. Moreover,
        $$  \Phi(\p\O\cap \cN(\sigma)) = {\bf C}^{p,p}(\sigma) \times \S^{n-2p-1}.  $$
    \end{Theorem}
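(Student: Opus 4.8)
The plan is to reduce Theorem~\ref{Theorem: main theorem 2} to Theorem~\ref{Theorem: main theorem} by the same ``take a product with a round sphere'' trick that converts an isolated singularity into a singular submanifold, while taking care to keep the isoperimetric minimality exactly — not merely the stationarity — of the product configuration. Concretely, I would first invoke Smale's construction from \cite{smale1999singular} (as recalled in section~\ref{subection: Smale's result}) applied to the cone ${\bf C}^{p,p}\subset\R^{2p+2}$, which is strictly stable and strictly minimizing and whose two Hardt--Simon smooth minimizers on the two sides are mutually diffeomorphic by the $x\leftrightarrow y$ symmetry; this produces a compact $(2p+1)$-manifold $(N, h)$ with a hypersurface $\Sigma_0$ that is the unique homological area minimizer in its class, with exactly one singular point, modelled on ${\bf C}^{p,p}$. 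The construction in the proof of Theorem~\ref{Theorem: main theorem} then upgrades this to a closed $(2p+2)$-manifold $(\tilde M,\tilde g)$ whose unique isoperimetric region of half volume has a single isolated singularity with tangent cone ${\bf C}^{p,p}$.

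Next I would form the Riemannian product $(M, g) := (\tilde M \times \S^{n-2p-1}, \tilde g + g_S)$, after rescaling $g_S$ so that $|\S^{n-2p-1}|$ is a small constant $\varepsilon$; the candidate isoperimetric region is $\O := \Omega_{\tilde M} \times \S^{n-2p-1}$, which has volume $|M|_g/2$ and boundary $\partial\Omega_{\tilde M}\times\S^{n-2p-1}$, whose singular set is $\{p_0\}\times\S^{n-2p-1}\cong\S^{n-2p-1}$, and whose tangent cone along the singular stratum is exactly ${\bf C}^{p,p}\times\R^{n-2p-1}$, i.e. locally $\S^{n-2p-1}\times{\bf C}^{p,p}$, giving the asserted normal form with $\Phi$ the product of the chart near $p_0$ from Theorem~\ref{Theorem: main theorem} with the identity on the sphere factor. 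To prove that $\O$ is actually isoperimetric and unique, I would slice: for any competitor $\Omega'\subset M$, the coarea/Fubini argument over the $\S^{n-2p-1}$ factor, combined with the isoperimetric inequality on $(\tilde M,\tilde g)$ applied fiberwise, shows $\P_g(\Omega')\geq \P_g(\O)$ once $\varepsilon$ is small enough that the ``curvature'' contribution of bending in the sphere directions is strictly dominated by the cost of deviating from the product structure; equality forces $\Omega'$ to be a product of the (unique) $\tilde M$-minimizer with the whole sphere. This last step is the standard argument that product regions are isoperimetric when one factor is very small, and the strict stability and strict minimality of ${\bf C}^{p,p}$ carried over from Smale's construction are what make the uniqueness survive the product.

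The constraint $p\in[3,\tfrac{n-1}{2}]$ enters for two reasons I would need to check: the lower bound $p\geq 3$ is forced because ${\bf C}^{p,p}$ is strictly stable and strictly minimizing only for $2p+2\geq 8$ together with the extra diffeomorphism hypothesis from Remark~\ref{Remark: for the main theorem} (which ${\bf C}^{p,p}$ satisfies), while the upper bound $p\leq\tfrac{n-1}{2}$ is exactly what makes $n-2p-1\geq 0$ so that the sphere factor $\S^{n-2p-1}$ is nonempty (and $n-2p-1=0$ recovers the isolated-point case, consistent with the stated convention $\S^0=\{\mathrm{pt}\}$). I would also record that the total dimension is $(2p+2)+(n-2p-1)=n+1$ as required.

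The main obstacle is the quantitative fiberwise argument: one must show that making $g_S$ small does not merely make the product \emph{stationary} but \emph{strictly area-minimizing for its volume} among all competitors in $M$, including those that are not fibered over $\S^{n-2p-1}$. The delicate point is that a competitor can ``trade'' perimeter between the $\tilde M$ and sphere directions, so a naive Fubini bound is not enough; one needs a Poincaré-type inequality on $\S^{n-2p-1}$ (with a constant that degenerates favorably as $\varepsilon\to0$) to control the oscillation of the fiberwise volume function, together with the \emph{strict} isoperimetric inequality on $(\tilde M,\tilde g)$ near its unique minimizer — i.e. a uniform lower bound on the second variation transverse to the family of minimizers, which is precisely the ``local minimizing neighborhood'' estimate inherited from Smale's construction in section~\ref{subection: Smale's result}. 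Packaging these into a single strict inequality $\P_g(\Omega')>\P_g(\O)$ for $\Omega'\neq\O$, uniformly in $\varepsilon$ small, is the technical heart; the remaining items (volume bookkeeping, the explicit chart $\Phi$, identifying the tangent cone) are routine.
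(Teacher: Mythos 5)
Your route is genuinely different from the paper's, and its central step is a gap, not a routine verification. You reduce to the $(2p+2)$-dimensional example and then take a Riemannian product with a small round sphere, claiming that for $\varepsilon$ small the product region is the unique half-volume isoperimetric region of $(\tilde M\times\S^{n-2p-1},\tilde g+\varepsilon^2 g_S)$. That claim is only asserted. Worse, the fiberwise argument you sketch points the wrong way: slicing gives $\P_g(\Omega')\geq\int_{\S^{n-2p-1}}\P_{\tilde g}(\Omega'_s)\,ds\geq\int I_{\tilde M}(|\Omega'_s|)\,ds$, but the isoperimetric profile $I_{\tilde M}$ is (generically) maximal near half volume, so fibers whose volumes deviate from $|\tilde M|/2$ make the right-hand side smaller, not larger; to close the argument you must quantitatively trade the vertical part of the perimeter against the oscillation of $s\mapsto|\Omega'_s|$, and even then you only get that a minimizer is close to the product, after which you need a rigidity statement upgrading ``close'' to ``equal'' for a limit object whose boundary is singular. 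That rigidity is precisely the delicate strictly-minimizing-neighborhood input, and it is the part you leave unproved. The paper avoids the collapsing analysis entirely: it invokes Smale's higher-dimensional construction (Lemma \ref{Lemma: local minimizing for high dim}, from \cite{smale2000construction}) to build, directly in an $(n+1)$-manifold, a hypersurface $\Sigma$ obtained by gluing a capped cone $\hat{\bf C}\times\S^{n-2p-1}$ into a smooth hypersurface $S$ by a handle, with the metric equal to $g_{eucl}+g_S$ on a tube around the singular set; it then repeats the barrier/neighborhood construction of Theorem \ref{Theorem: V}, the tube gluing of Theorem \ref{Theorem: Get M(R)}, and the perimeter--components--homology analysis of Lemma \ref{Lem: curvature control for isoperimetric regions}, Lemma \ref{Lemma:Perimeter,components,diameter} and Theorem \ref{Theorem: exists singular region}.

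Even granting your collapsing step, the output would not match the statement. Theorem \ref{Theorem: main theorem} yields a region with \emph{two} isolated singular points, so your product region has singular set two disjoint copies of $\S^{n-2p-1}$ rather than a single sphere; and the asserted normal form fails, since near the singular points of the lower-dimensional example the metric is a conformally modified round metric (not flat) and you have shrunk the sphere factor, so the tubular neighborhood of the singular set is not isometric to $(\B^{2p+2}(\sigma)\times\S^{n-2p-1},\,g_{eucl}+g_S)$ with the standard metrics, which is part of the claim. There is also a dimensional slip: Smale's construction applied to ${\bf C}^{p,p}$ takes place in $\S^{2p+2}$ (a $(2p+2)$-manifold, with $\Sigma_0$ of dimension $2p+1$ and two singular points), not in a compact $(2p+1)$-manifold with one singular point. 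These secondary issues could perhaps be engineered away, but together with the missing core argument they mean the proposal, as written, does not prove Theorem \ref{Theorem: main theorem 2}.
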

    
    We could replace $\S^{n-2p-1}$ to quite numerous varieties as the singular part, with only some topological restrictions (see details in section \ref{Section: Proof of theorem 2}). The construction comes from the fruitful examples of singular homological area minimizing codimension 1 currents (see also \cite[Theorem A]{smale2000construction}). The proof of Theorem \ref{Theorem: main theorem 2} strongly uses the result from \cite{smale2000construction} and a slight modification of the proof in Theorem \ref{Theorem: main theorem}. In section \ref{section: Constructing the manifolds} to \ref{Section: Proof of the main theorem}, we will focus on constructing examples with isolated singularities. In section \ref{Section: Proof of theorem 2}, we will prove Theorem \ref{Theorem: main theorem 2}.


\subsection*{Idea of the construction for isolated singularities}
    \hfill
    
    As in \cite{smale1999singular}, the starting point is constructing a singular minimal surface in $\mathbb S^8$. We denote ${\bf{C}}$ as a 7-dimensional Simons' cone. Then the product space $\textbf{C} \times \R$ will be an area-minimizing cone in $\R^9$. Now consider $\Sigma={\bf{C}}\times \R \cap \mathbb S^8$ in $\mathbb S^8$. Clearly, $\Sigma$ is a minimal hypersurface in $(\mathbb S^8, g_S)$ with two isolated singularities, where $g_{S}$ is the round metric.
    
    The important part of Smale's work is to prove Theorem~\ref{Lemma: Local homological minimizing} [cf. \cite[Lemma 4]{smale1999singular}]: under a conformal change of the standard metric of $\mathbb S^8$, there exists a smooth neighborhood $V$ of $\Sigma$ such that $\Sigma$ is the unique homological area-minimizing current in $V$. Moreover, $\Sigma$ splits $\overline{V}$ into two parts, $V_+$ and $V_-$, and $\p V$ has exactly two components (denoted by $\Gamma_+,\Gamma_-$) such that they lie in $\overline{V}_+, \overline{V}_-$ respectively. Each part of $\{\Gamma_\pm\}$ is homologous to $\Sigma$ and $\Gamma_+$ is diffeomorphic to $\Gamma_-$ (Theorem \ref{Theorem: V} below).

    Next, we will construct a manifold with a singular isoperimetric region. Consider $(\Gamma,g_l)$ the 7-manifold which is diffeomorphic to $\Gamma_+$ and $\Gamma_-$, endowed with a ``larger'' metric (see Theorem \ref{Theorem: Get M(R)} for details); denote $T_R:=\Gamma\times [0,R]$ a tube with length $R>0$, with the product metric $g=g_l + dr^2$. We glue the $\Gamma_+,\Gamma_-$ with the boundary of the tube, $\wtilde{\Gamma}_-:=\Gamma\times\{0\},\wtilde{\Gamma}_+:=\Gamma\times\{R\}$ respectively to form a torus, calling it $M_R$. Finally, we prove that (in section~\ref{Section: Proof of the main theorem}) for sufficiently large $R$, the unique isoperimetric region with half the volume of $M_R$ has boundary $(\Gamma\times \{t_0\}) \cup \Sigma$ for some $t_0\in (0,R)$, where $\Sigma$ is the one described in the previous paragraph, with two isolated singularities. 
    
    \begin{figure} [ht] 
        \centering
        \includegraphics{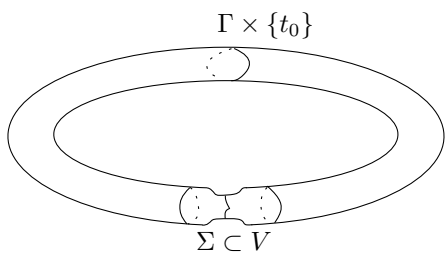}
        \caption{$M(R):= \Gamma\times [0,R]\cup V/\Gamma_{\pm} \sim \tilde \Gamma_{\pm}.$ }
        \label{Figure: Torus}
    \end{figure}
    \subsection*{Acknowledgement}
    I am thankful to my two advisors, Professors Bennett Chow and Luca Spolaor, for their instruction in geometric analysis. I am especially indebted to Professor Spolaor for his tremendous advice and suggestions on this problem and on geometric measure theory in general, which made this paper possible. Also, I would like to thank Zhenhua Liu for his valuable comments and corrections and Davide Parise for his helpful suggestions.

\section{Preliminaries}\label{Section: Preliminaries}

\subsection*{Sets of finite perimeters}
\hfill

    Let $n\geq 1$ and $(M, g)$ be an $(n+1)$-dimensional closed oriented Riemannian manifold. In most cases, we will consider $n=7$. In order to have an explicit definition of our variational problem, we first review the sets with well-defined measure theoretical perimeters.

    \begin{Definition}[Caccioppoli sets/sets of finite perimeter; see e.g.  \cite{munoz2020isoperimetric}]
        Suppose $E$ is a Lebesgue measurable subset of $M$, we define the perimeter of $E$ by
        $$\P_g(E)=\sup\left\{ \int_E \div_g X \ d\cH^{n+1} (x)\ : \ X\in \Gamma^1 (M) , \ \norm{X}_{g}\le 1 \right\}.$$       
        We define the collection of sets of finite perimeters in $(M,g)$ by 
        $$\cC(M,g):= \{\O\subset M\,:\, \P_g(\O)<\infty \}.$$
        In addition, for the subset of $\cC(M)$ with a fixed volume, we denote
        $$\cC(M,g,t):= \{\O\subset M\,:\, \P_g(\O)<\infty\,,\,\, |\O|_g=t\}.$$
        We usually omit the subscript $g$ above if the 
        defining 
        metric is understood.
    \end{Definition}

    By the Riesz Representation Theorem (see e.g. 
     \cite{simon1983lectures}), there is a $TM$-valued Radon measure $\mu_\O$ such that for any $X\in \Gamma^1 (M) $, we have
    $$ \int_\O \div_g X= \int_{M} X\cdot_g  d\mu_\O.$$

    The total variation is denoted by $\norm{\mu_\O}$. 
    For an open set $U$, we denote 
    $$\P(\O;U)=\norm{\mu_\O}(U),$$ 
    the relative perimeter in $U$.
    
    By de Giorgi's structure theorem, there is a $n$-rectifiable set $\p^* \O$ and $\nu_\O$ a Borel function on $\p^* \O$ with unit sphere such that 
        $$\mu_\O=\nu_\O \cH^n\mres\p^* \O. $$
    
    For simplicity, we will denote $\p \O = \spt \mu_\O$.


\subsection*{Isoperimetric problems and minimal surfaces}
\hfill

    In this paper, we will study the following minimizing problem:
    \begin{align*}
        \cI(M, g, t):=\inf\{\P_g(\Omega)\, :\,\Omega\in \cC(M,g,t) \}, 
    \end{align*}
    where $(M^8,g)$ is a closed Riemannian manifold and $t\in (0, |\O|_g)$.
    We will call a Cacciopoppli set $\O$\ a \textbf{$t$-isoperimetric region} in $(M,g)$ if 
    $$\P_g(\O)= \cI(M,g,t)\ \ \text{ and } \ \ \O \in \cC(M,g,t).  $$
    Intuitively, the boundary of $\O$ is a $n$-dimensional hypersurface. 
    \begin{Definition}
        Let $\Sigma\subset M$ be a smooth open hypersurface. We denote 
        \begin{align*}
            Reg(\Sigma)\ &:= \{x\in \overline{\Sigma}\cap U: \overline{\Sigma} \text{ is smooth, embedded hypersurface near }x \}; \\
            Sing(\Sigma)& := \overline{\Sigma}\setminus Reg(\Sigma). 
        \end{align*}
        By adding points to $\Sigma$ if necessary, we may identify $\Sigma = Reg(\Sigma)$ for simplicity. We say $\Sigma$ is \textbf{regular}, if $Sing(\Sigma)=\emptyset$. In addition, we will always assume $\Sigma$ has \textbf{optimal regularity}: $\cH^{n-2} (Sing(\Sigma)) = 0$ and $\cH^n\mres{\Sigma}$ is locally finite.
    \end{Definition}
    
    Similar to the (local) area-minimizing hypersurfaces, for $2\le n\le 7$, it is a well-known result that $Sing(\O)=\emptyset$ if $\O$ is an isoperimetric region \cite{almgren1976existence,gonzalez1983regularity,gruter1987boundary}. So higher dimension is the only possible case that singularities may appear. On the other hand, Simons' cone \cite{bombieri1969minimal} manifests an example of an area-minimizing hypersurface in Euclidean space, but isoperimetric regions are smooth regardless of dimensions. 
    \begin{Example}
        [Barbosa, do Carmo, and Eschenburg \cite{barbosa1988stability}] Let $M^{n+1}(c)$ denote the simply connected complete Riemannian manifold with constant sectional curvature $c$. Let $X:\Sigma^n \to M^{n+1}$ be an immersion of a differentiable manifold $\Sigma^n$. Suppose $X(\Sigma)$ has constant mean curvature. Then the immersion $X$ is volume-preserving stable (see Definition \ref{Definition: volume preserving stable}) if and only if $X(\Sigma) \subset M(c)$ is a geodesic sphere.
    \end{Example}
    
    Similar to the area functional for the minimal surfaces, we define the following functional for isoperimetric regions. An important corollary is that all isoperimetric regions have constant mean curvature. The above examples are isoperimetric regions in space forms (see also \cite{schmidt1943beweis,kwong2022some}). 
    
    Fixing a metric $g$, we can consider the functional $\mathcal{F}: \R \times \cC(M)\to \R$ by
    $$\mathcal{F}(\lambda, \O)=\P(\O)+\lambda \abs{\O}.$$ 
    Suppose $X$ the smooth compactly supported vector field, $\{\phi_t\}$ the corresponding diffeomorphism, $H$ the generalized mean curvature vector, and $\upsilon$ the outer normal vector for $\p^* \O$. By the area formula and first variation for potential energy (e.g., \cite{maggi2012sets} Chapter 17), we can get the first variation with respect to $\O$:
    \begin{align*}\label{Isoperimetric functional}
        \d_2\F(\lambda, \O)&=\res{\frac{d}{dt} \F(\lambda,\phi_t(\O))}{t=0}\\
        &=\int_{\p^* \O} \div X \, d\H^{n-1}+ \lambda \int_{\p^*\O} X\cdot \upsilon \, d\H^{n+1}.
    \end{align*}

    \begin{Remark}
        In addition, if $\O$ is the isoperimetric region (i.e. the minimizer of \eqref{minimizer}), we have $ \d_2\F(\lambda, \O)=0$. As remarked in \cite{ilmanen1996strong, schoen1981regularity}, by using the estimates of Hausdorff dimension for the singular set, $\cH^{n-2} (\p \O)=0$, then using a cutoff function argument for the test functions, we can see that $\lambda=-H\cdot \upsilon=-h$ for any points in $Reg (\p \O)$, which provides $Reg (\p \O)$ a constant mean curvature hypersurface in $M$. We call $\Sigma$ a $H$-hypersurface if $H_{\Sigma} \equiv H$.
    \end{Remark}
    
    Similar to the area-minimizing hypersurfaces, isoperimetric regions have the following sense of ``stability''.
    \begin{Definition} \label{Definition: volume preserving stable}
        For $\O \in\cC(M)$, we say $\O$ is volume-preserving stable if $\p^* \O$ has constant mean curvature and for any diffeomorphism $\phi$ with $|\phi(\O)|_g=|\O|_g$, we have $\d^2\P(\O)\ge 0$.
    \end{Definition}

    Moreover, suppose $\p\O$ is smooth. By \cite[Proposition 2.5]{barbosa1988stability}, suppose 
        $$f(x)=\Prod{\frac{\p\phi}{\p t}(x)}{\nu(x)},$$ 
    we have 
    \begin{align*}
        \d^2\P(\O)&=\d^2_2\F (-H,\O) \\
        &=\int_{\p \O} -f\Delta f- (|A_{\Sigma}|^2 + Ric_M(\nu, \nu))f^2 \, d\cH^n(x)\ge 0,
    \end{align*}
    where $\int_{\p \O} f =0. $

    \begin{Remark}
        For the sake of disambiguation, we will say 
        ``stable''
        if $\Sigma$ is a minimal surface such that $\d^2(\Sigma)>0$ for any diffeomorphisms. 
    \end{Remark}
    
    Using the definition in \cite{smale1999singular}, we define ``strictly stability'' as follows, which generalized the definition over isolated cones (see \cite{caffarelli1984minimal}).
   
    \begin{Definition}\label{Definition: Strictly stable}
        We say $\Sigma$ is \textbf{strictly stable} if $\Sigma$ is a minimal surface, with singularities on a closed set of codimension 6, there is a positive constant $C$ such that
        $$ \int_\Sigma |\nabla u|^2 - (|A|^2 +Ric(v)  )u^2 \ge C\int_\Sigma u^2 \rho^{-2} , $$
        for all $u \in W_0^{1,2,-1} (\Sigma).$
        Here we define $\rho (x) =dist (x,Sing(\Sigma))$ and $W_0^{1,2,-2}(\Sigma) = \overline{ C_0^1(\Sigma\setminus Sing(\Sigma)) }$ with norm
        $$  \norm{u}^2_{1,2}:= \int_\Sigma |\nabla u|^2+u^2\rho^{-2}. $$
   \end{Definition}
    \begin{Remark}
        Smale in \cite{smale1999singular} shows that after a conformal change of metric, the hypersurface constructed in $\mathbb S^8$ is strictly stable in the sense of Definition~\ref{Definition: Strictly stable}, which implies the existence of a neighborhood such that it is homological minimizing in it (see Theorem~\ref{Lemma: Local homological minimizing} or \cite[Lemma 4]{smale1999singular}).
   \end{Remark}

\subsection*{Cones with isolated singularities}
\hfill

    Because the tangent cones of isoperimetric regions are area-minimizing cones, we will exhibit some properties about cones with isolated singularities, i.e., singular minimizing cones $K\subset \R^{n+1}$ with $Sing(K)=\{0\}$ and $n\ge 7$. 

    \begin{Example}[Minimal cones]
        Given $\Sigma^{n-2} \subset \mathbb S^{n-1}$ a smooth minimal hypersurface, the \textbf{cone} based on $\Sigma$ is defined as
        \begin{align}
            {\bf{C}}(\Sigma)=\{ \lambda x: \: x\in \Sigma, \ \lambda> 0 \}. 
        \end{align}
    \end{Example}
    
    In addition, if a cone ${\bf{C}}$ is a minimal surface, then ${\bf{C}}\cap \mathbb S^{n-1}$ is a minimal surface in $\mathbb S^{n-1}$. We denote ${\bf{C}}_1:={\bf{C}}\cap \mathbb S^{n-1}$.

    \begin{Definition}[{\cite[section 3]{hardt1985area}}]
        Let ${\textbf{C}}$ be a regular hypercone (i.e. $Sing ({\bf C})\subset \{0\}$), we say $\textbf{C}$ is strictly minimizing if there is an $\theta > 0$ such that
        $$ \M({\textbf{C}}_1)\le \M(S) - \theta\e^n , $$
        whenever $\e >0$ and $S$ is an integer multiplicity current with $\spt S \Subset \R^{n+1}\setminus B_\epsilon$ and
        $\p S= \p {\textbf{C}}_1$.
    \end{Definition}  
     Hardt and Simon in \cite[Theorem 3.2]{hardt1985area} exhibit several equivalent definitions of strictly minimizing. The critical property of strictly minimizing hypercone is the following property of graphical local minimizing.
    
    \begin{Theorem}[{\cite[Theorem 4.4]{hardt1985area}}] Suppose $\bf C$ be a regular strictly minimizing (multiplicity one) hypercone in $\R^{n+1}$. Let $M$ be a smooth oriented embedded hypersurface in $\R^{n+1}$ with the representation in the form 
        $$ M = graph_{{\textbf{C}}_1} h = \{x+h(x)\nu(x) : \  x\in {\bf C}\setminus\{0\}\}, $$
        where $\nu$ is the unit normal vector and
        $h$ is some function in $C^2({\textbf{C}}_1)$ such that
        $$ \Big| \frac{h(r\o)}{r} \Big| + |Dh(r\o)| \le C r^q, \ \ \ r\o\in {\textbf{C}}_1, $$
        for some $q>0$.
        
        Assume that $\R^{n+1}$ equipped a $C^3$ Riemannian metric $\dis g = \sum_{i,j =1}^{n+1} g_{ij} dx^idx^j$, which satisfies 
        $$g_{ij}(0)=\delta_{ij}, \frac{\p g_{ij}}{\p x_k} (0) = 0,  \  \ i,j,k = 1,\dots ,n+1.$$
        Suppose $M$ is a minimal surface (mean curvature zero) in $(\R^{n+1},g)$, then there is a $\rho>0$ such that $M$ is area minimizing in $B_\rho(0)$ with respect to the metric $g$.
    \end{Theorem}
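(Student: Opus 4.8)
The plan is a calibration argument. We foliate a punctured neighborhood of the cone point by $g$-minimal hypersurfaces, one of which is $M$, and calibrate $M$ with the associated closed comass-one $n$-form. As a preliminary, record that a regular strictly minimizing hypercone is strictly stable in the sense of Definition~\ref{Definition: Strictly stable} (this is built into the analysis of \cite{hardt1985area}; cf.\ \cite{smale1999singular}), so the Jacobi operator $L_{\mathbf C}=\Delta+|A|^2+\mathrm{Ric}(\nu)$ of $\mathbf C$ is invertible on the weighted space attached to $\mathbf C$. Rescaling $B_\rho(0)$ to $B_1(0)$ and setting $g_\rho(x):=g(\rho x)$, the hypotheses $g_{ij}(0)=\delta_{ij}$, $\partial_k g_{ij}(0)=0$ give $g_\rho\to g_{eucl}$ in $C^2_{loc}$, while $|h(r\omega)/r|+|Dh(r\omega)|\le Cr^q$ gives $\tfrac1\rho(M\cap B_\rho)\to\mathbf C\cap B_1$ smoothly away from $0$; hence, for $\rho$ small, $\tfrac1\rho(M\cap B_\rho)$ is a small perturbation of $\mathbf C\cap B_1$ in a metric $C^2$-close to $g_{eucl}$, and strict stability, being an open condition for this mode of convergence, is inherited by $M\cap B_\rho$.

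Next, the foliation. By \cite[Theorem 2.1]{hardt1985area} together with strict minimality, the dilations of the two one-sided $\delta$-minimal smoothings of $\mathbf C$, together with $\mathbf C$, foliate $\R^{n+1}\setminus\{0\}$, and every $\delta$-minimal normal graph over $\mathbf C\setminus\{0\}$ with decay $O(r^{q'})$, $q'>0$, is a leaf. Running the same construction for the metric $g$ on $B_\rho\setminus\{0\}$ — the analytic input being the invertibility just recorded, which persists under the $C^2$-small change of metric — produces a foliation $\{M_s\}_{|s|<\sigma}$ of a neighborhood of $\mathbf C\cap B_\rho$ by $g$-minimal hypersurfaces with controlled decay at $0$; the leaves are pairwise disjoint and ordered (strong maximum principle for $g$-minimal hypersurfaces), and for each $\epsilon\in(0,\sigma)$ the leaves with $|s|\le\epsilon$ sweep out a region $R_\epsilon$, bounded by $M_{-\epsilon}$, $M_\epsilon$ and $\partial B_\rho$, which contains a neighborhood of the cone point. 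Because $M$ is $g$-minimal and, by hypothesis, a normal graph over $\mathbf C\setminus\{0\}$ with $|h/r|+|Dh|\le Cr^q$, $q>0$, it is one of these leaves, say $M=M_0$. On $R_\epsilon\setminus\{0\}$ let $N$ be the $g$-unit normal to the foliation, with a fixed coorientation, and set $\omega:=\iota_N\,dV_g$; then $|\omega|_g\equiv1$, $\omega$ restricts on each leaf to its $g$-volume form, and $d\omega=(\div_g N)\,dV_g=H_g\,dV_g=0$ since every leaf is $g$-minimal. Hence $\omega$ is a $g$-calibration on $R_\epsilon\setminus\{0\}$ calibrating $M$.

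Finally, the comparison. Let $M'\subset B_\rho(0)$ be a competitor with $\partial M'=\partial(M\cap B_\rho)$; we may take $M'$ to be $g$-mass minimizing in its class, hence a stationary integral varifold. Since $\partial M'$ lies on $\partial B_\rho$ strictly between $M_{-\epsilon}$ and $M_\epsilon$, the one-sided strong maximum principle for $g$-stationary varifolds, applied with the smooth $g$-minimal leaves $M_{\pm\epsilon}$, forces $\spt M'\subset\overline{R_\epsilon}$; fix such an $\epsilon$. For $\epsilon'>0$ let $W$ be the $(n{+}1)$-current enclosed between $M$ and $M'$ in $R_\epsilon\setminus B_{\epsilon'}$ (it exists because $R_\epsilon$ retracts onto a leaf and $M,M'$ share their boundary on $\partial B_\rho$). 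Stokes' theorem with $d\omega=0$ then gives $\cH^n_g(M\cap(B_\rho\setminus B_{\epsilon'}))=\int_{M\cap(B_\rho\setminus B_{\epsilon'})}\omega=\int_{M'\cap(B_\rho\setminus B_{\epsilon'})}\omega+\int_{\partial W\cap\partial B_{\epsilon'}}\omega\le\cH^n_g(M')+C(\epsilon')^n$, using that $\omega$ calibrates $M$, that $|\omega|_g\le1$, and that $\cH^n_g(W\cap\partial B_{\epsilon'})\le C(\epsilon')^n$. Letting $\epsilon'\to0$ and using the monotonicity bound $\cH^n_g((M\cup M')\cap B_{\epsilon'})\le C(\epsilon')^n$ for the omitted inner piece, we get $\cH^n_g(M\cap B_\rho)\le\cH^n_g(M')$; i.e.\ $M$ is area minimizing in $B_\rho(0)$ for $g$.

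The main obstacle is the foliation step: carrying out the Hardt--Simon construction in the metric $g$ rather than $g_{eucl}$, so that the leaves inherit the $O(r^q)$-type decay at the cone point — needed both for $M$ to be an actual leaf and for $\omega$ to behave well across $0$ — and so that the foliation genuinely fills a two-sided neighborhood of $\mathbf C\cap B_\rho$, which is what makes the maximum-principle trapping work. This is exactly where strict minimality, equivalently the quantitative weighted stability of $\mathbf C$ from Definition~\ref{Definition: Strictly stable}, is essential; the $C^3$ regularity of $g$ is harmless, since it still yields $C^{2,\alpha}$ leaves and a continuous calibration, enough for Stokes' theorem. Secondary technical points are the one-sided strong maximum principle for $g$-stationary varifolds and the vanishing of the $\partial B_{\epsilon'}$ boundary term, both of which rest on standard monotonicity estimates.
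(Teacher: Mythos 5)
Your proposal is not the argument behind this statement, and it contains a genuine gap at its foundation. The theorem, quoted from Hardt--Simon, assumes only that $\mathbf C$ is regular and \emph{strictly minimizing}; it does not assume strict stability. Your very first step asserts that a regular strictly minimizing hypercone is strictly stable in the sense of Definition~\ref{Definition: Strictly stable}, and everything afterwards (invertibility of the Jacobi operator on the weighted space, persistence under the $C^2$-small metric change, construction of the $g$-minimal foliation with $O(r^{q})$-decay, and hence the calibration form $\omega=\iota_N\,dV_g$) rests on that assertion. But strict minimality and strict stability are distinct hypotheses, and the implication you invoke is not ``built into'' Hardt--Simon: this paper itself draws the distinction explicitly, remarking that Smale's generalization to the manifold setting (Theorem~\ref{Lemma: Local homological minimizing}) requires \emph{additionally} assuming strict stability, and that the foliation-based proof of local minimality is due to Z.~Wang under that extra hypothesis. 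So your route proves (modulo further work) a theorem with a stronger hypothesis than the one stated. Even granting strict stability, two further steps are asserted rather than proved: that the Hardt--Simon one-sided leaves can be re-solved as $g$-minimal graphs with the required decay at the cone point (this is the substance of Wang's construction, not a soft perturbation remark), and that $M$ itself must be a leaf of the resulting foliation --- a uniqueness statement for $g$-minimal normal graphs with $O(r^q)$ asymptotics that needs an argument (or one must instead build the foliation around $M$ directly).

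For comparison, the paper does not reprove this result; it cites Hardt--Simon, whose proof is a direct comparison argument: the strict minimizing property supplies a quantitative area gap $\theta\epsilon^n$ against competitors, while the graph decay $|h/r|+|Dh|\le Cr^q$ and the normalization $g_{ij}(0)=\delta_{ij}$, $\partial_k g_{ij}(0)=0$ make the difference between the areas of $M$, the cone, and their $g$- versus Euclidean masses of strictly smaller order in the ball radius; no foliation, calibration, or stability input is needed. Your calibration scheme (leaf-trapping by the maximum principle, Stokes with the cut at $B_{\epsilon'}$ and the $O((\epsilon')^n)$ error) is internally coherent and is essentially the Wang/Smale strategy, but as written it does not establish the theorem as stated because of the unjustified passage from strict minimality to strict stability.
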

    \begin{Remark}
    \hfill
    
    \begin{itemize}   
        \item In \cite{smale1999singular}, Smale generalized the above local minimizing property in the manifold setting (Theorem~\ref{Lemma: Local homological minimizing} below), with additionally assuming the cone $\textbf{C}$ is strictly stable. Moreover, Zhihan Wang in \cite[Theorem 5.1]{wang2020deformations} also presents a proof of local minimizing property by constructing a foliation over $\Sigma$.
        \item Simons' cone $\textbf{C}$ is regular, strictly stable, and strictly minimizing. And it splits $\R^8$ into two parts $E_+$ and $E_-$. By \cite[Theorem 2.1]{hardt1985area}, each $E_+$, $E_-$ contains one smooth area minimizing hypersurface up to scaling, call them $R_+,R_-$ respectively. By the symmetry of $\textbf{C}$, $R_+,R_-$ are diffeomorphic to each other. So Simons' cone satisfies the requirement in Remark \ref{Remark: for the main theorem}. In the rest of this paper, we will not use the definitions above about cones with singularities and sufficiently use the Simons' cone.
    \end{itemize}
    \end{Remark}

\section{Constructing the manifolds with Smale's ideas}\label{section: Constructing the manifolds}

This section is dedicated to constructing a family of manifolds, a suitable choice of which will later give us the main Theorem~\ref{Theorem: main theorem}.
We divide it into two parts: first, using a result of Smale \cite{smale1999singular}, we obtain the first piece of our manifold, then we suitably modify it to glue it to a cylinder to obtain the desired construction. The second part is more similar to an 8-dimensional torus example in \cite[Lemma 4.1]{chodosh2022riemannian} by Chodosh, Engelstein, and Spolaor.

\subsection{Smale's main result} \label{subection: Smale's result}
\hfill

    We recall here the main result from \cite{smale1999singular}, which will be the starting point of our construction.

    \begin{Theorem}[{\cite[Lemma 4]{smale1999singular}}]\label{Lemma: Local homological minimizing}
        Let $\bf C$ be any strictly stable and strictly minimizing cone, e.g., Simons' cone. Let $\Sigma:=({\bf{C}}\times \R)\cap \mathbb S^8$. There exists a $C^\infty$ metric $g$ on $\mathbb S^8$ and $\delta>0$ such that $\Sigma$ is uniquely homologically area-minimizing in the tubular neighborhood
        $$
        U_\delta:= \{ x\in \mathbb S^8 : d_{\mathbb S^8}(x,\Sigma)\le \d \},
        $$ with respect to the metric $g$.
    \end{Theorem}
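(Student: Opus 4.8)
The plan is to follow Smale's strategy from \cite{smale1999singular}, breaking the argument into a local foliation/Jacobi-field analysis near the singular set and a global patching argument. First I would set up the natural $SO(4)\times SO(4)$-symmetric coordinates on $\mathbb{S}^8$ adapted to $\Sigma = ({\bf C}\times\mathbb{R})\cap\mathbb{S}^8$, so that the cone structure of ${\bf C}$ near each of the two singular points $p_\pm$ of $\Sigma$ is manifest; in these coordinates $\Sigma$ looks, near $p_\pm$, like the cone ${\bf C}$ over the minimal $\Sigma_1 = {\bf C}\cap\mathbb{S}^{n-1}$ inside a small geodesic ball. The key analytic input is that ${\bf C}$ is strictly stable and strictly minimizing (true for the Simons' cone), which by the theory of Hardt--Simon (\cite{hardt1985area}, as quoted above) and Caffarelli--Hardt--Simon produces, on one side of the cone, a smooth strictly minimizing hypersurface whose rescalings foliate a one-sided neighborhood, and symmetrically on the other side. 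The strict stability gives a positive lower bound on the bottom of the spectrum of the Jacobi operator $L = \Delta + |A|^2 + \mathrm{Ric}(\nu)$ in the weighted space $W^{1,2,-1}_0$, which is exactly the hypothesis needed to run a calibration-type or strict-minimization argument.

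The heart of the proof is the conformal modification of the round metric $g_S$. I would choose a conformal factor $e^{2\varphi}$ that is $\equiv 1$ away from a neighborhood of $\Sigma$, is radially symmetric around each singular point, and is engineered so that: (i) $\Sigma$ remains minimal for $g = e^{2\varphi}g_S$ (this is an ODE/algebraic condition on $\varphi$ along $\Sigma$, since conformal changes act predictably on the mean curvature of a fixed hypersurface), and (ii) the new metric satisfies the normalization $g_{ij}(p_\pm) = \delta_{ij}$, $\partial_k g_{ij}(p_\pm) = 0$ required to invoke the Hardt--Simon local minimizing theorem at the singular points. Then the local minimization near $p_\pm$ follows from that theorem (the graphical decay hypothesis on the comparison surfaces being supplied by the foliation), while on the regular part $\Sigma\setminus\{p_\pm\}$ one uses that a strictly stable smooth minimal hypersurface is locally area minimizing, together with a covering/patching argument to glue the local minimizers into a global one on $U_\delta$ for $\delta$ small. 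Uniqueness comes from strict minimization: any other homologous minimizer in $U_\delta$ would have to agree with $\Sigma$ by the foliation (a competitor squeezed between the leaves on each side must coincide with the central leaf), or one derives a contradiction with the strict inequality $\mathbf{M}({\bf C}_1)\le \mathbf{M}(S)-\theta\varepsilon^n$.

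The main obstacle I expect is (i) above: arranging that $\Sigma$ stays \emph{exactly} minimal after the conformal change while \emph{simultaneously} achieving the metric normalization at $p_\pm$ and keeping $\varphi$ compactly supported near $\Sigma$. These are competing constraints, and one must check that the linearized problem for $\varphi$ is solvable --- concretely, that the mean-curvature-preserving condition along $\Sigma$ is an underdetermined elliptic (or ODE, by symmetry) problem with enough freedom to also prescribe the $2$-jet of $g$ at the two points. A secondary difficulty is ensuring the conformal change does not destroy strict stability; since strict stability is an open condition in a suitable topology and the perturbation can be made small in $C^2_{loc}$ away from the cone point while being controlled near it by the scaling structure, this should go through, but it requires care with the weighted norms in Definition~\ref{Definition: Strictly stable}. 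Finally, the patching of local minimizers into a global one on $U_\delta$ needs the two-sided foliation to be coherent across the regular part --- this is where Smale's construction (and Wang's foliation in \cite{wang2020deformations}) does the real work, and I would cite it rather than reprove it.
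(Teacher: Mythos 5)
Note first that the paper does not actually prove this statement: it is quoted from Smale and the ``proof'' is a one-line citation to \cite[Lemma 4]{smale1999singular}, so the only meaningful benchmark is Smale's own argument. Measured against that, your outline assembles the right ingredients (a conformal change preserving minimality of $\Sigma$, Hardt--Simon theory at the two singular points, barriers/foliations on the regular part, uniqueness by trapping a competitor between leaves), but it misplaces the central difficulty. You treat strict stability of $\Sigma$ as something already present which the conformal perturbation must merely not destroy (``strict stability is an open condition\,\dots\,should go through''). In the round metric this is false: since $\mathrm{Ric}_{\mathbb{S}^8}>0$, the stability integrand contains $|A|^2+\mathrm{Ric}(\nu,\nu)\ge 7$, so no closed two-sided minimal hypersurface of $(\mathbb{S}^8,g_S)$ is stable; in particular $\Sigma=({\bf C}\times\R)\cap\mathbb{S}^8$ is unstable, and only the \emph{cone} ${\bf C}$ is strictly stable. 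The whole point of the conformal factor in \cite{smale1999singular} is to \emph{create} strict stability of $\Sigma$ in the weighted sense of Definition~\ref{Definition: Strictly stable} while keeping $\Sigma$ minimal: one takes $e^{2\varphi}g_S$ with $\varphi$ built from the distance to $\Sigma$ so that $\partial_\nu\varphi=0$ along $\Sigma$ (hence $H$ is unchanged) while the normal Hessian of $\varphi$ contributes a favorable zeroth-order term to the second variation, the strict stability and strict minimality of ${\bf C}$ being exactly what controls the weighted term $\int u^2\rho^{-2}$ near $p_\pm$. Without this step your patching/barrier argument has no starting point on the regular part, and the local input at $p_\pm$ is not just the Euclidean Hardt--Simon theorem plus a $2$-jet normalization --- Smale proves a manifold version under the additional strict-stability hypothesis, which is precisely what the quoted Lemma packages.

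Relatedly, the ``competing constraints'' you flag as the main obstacle are not really obstacles: the normalization $g_{ij}(p_\pm)=\delta_{ij}$, $\partial_k g_{ij}(p_\pm)=0$ holds in normal coordinates for any smooth metric, and preserving minimality under a conformal change only requires the first normal derivative of $\varphi$ to vanish along $\Sigma$, which is trivially arranged for a factor depending on $d(\cdot,\Sigma)^2$; the genuine work is producing the spectral gap and then converting strict stability plus the Hardt--Simon one-sided foliations into unique homological minimality on a tubular neighborhood. If you intend to defer those steps to \cite{smale1999singular} (or \cite{wang2020deformations}), that matches what the paper itself does, but the sketch as written would not reconstruct the proof because it starts from a stability hypothesis that fails in the round metric.
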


    \begin{proof} The proof of this result can be found in \cite[Lemma 4]{smale1999singular}. 
    \end{proof}

    In order to glue the $U_\delta$ along the boundaries with a manifold (with boundary), we need the boundary of $U_\d$ to be smooth. Because of the singularities of $\Sigma$, we cannot expect the smoothness of the boundaries of $U_\d$ for any small $\d>0$. Fortunately, as remarked in the proof of \cite[Lemma 4]{smale1999singular}, we can find a smaller neighborhood $V\subset U_\d$ such that $\Sigma$ is still homological area-minimizing in $V$, and the boundary $\p V$ is smooth. The basic idea is to glue in pieces of foliations.

    \begin{Theorem}\label{Theorem: V}
         Denote $\bf C$ the Simons' cone. Let $\Sigma:=({\bf{C}}\times \R)\cap \mathbb S^8$ and let $U_\delta$ and g be as in the previous theorem. There exists an open subset $V$ such that $\overline V\Subset U_\delta$ and
         \begin{itemize}
             \item $\Sigma \subset V$ and it is homologically minimizing in $V$ with respect to the metric $g$;
             \item $V\setminus \Sigma$ consist of $2$ connected components, $V_\pm$ and $\partial V_\pm=\Sigma \cup \Gamma_\pm$, disjoint union, and $\Gamma_\pm$ are smooth and diffeomorphic to each other.
         \end{itemize}
         
    \end{Theorem}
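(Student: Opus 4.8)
The plan is to carry out the construction sketched in the proof of \cite[Lemma 4]{smale1999singular}. Keep $\Sigma$ fixed, with singular points $p_+,p_-$, and set $\Sigma^{reg}:=\Sigma\setminus\{p_+,p_-\}$, a connected smooth embedded $g$-minimal hypersurface. Build $\Gamma_+$ and $\Gamma_-$ as hypersurfaces ``parallel'' to $\Sigma$ on its two sides away from $p_\pm$, capped off near $p_\pm$ by leaves of a Hardt--Simon foliation; then let $V_\pm$ be the region between $\Sigma$ and $\Gamma_\pm$, and $V:=V_+\cup\Sigma\cup V_-$.

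Near the singular points the relevant model is $\mathbf C$, the $7$-dimensional Simons' cone in $\R^8$, which is regular, strictly stable and strictly minimizing and which is the tangent cone of $\Sigma$ at each $p_\pm$. By the Hardt--Simon foliation theorem \cite[Theorem 4.4]{hardt1985area}, in the Riemannian form established in \cite{smale1999singular} (see also \cite[Theorem 5.1]{wang2020deformations}), there are $\rho_0\in(0,\delta)$ and, near each $p_\pm$, a smooth foliation of the $g$-geodesic ball $B_{\rho_0}(p_\pm)$ by $g$-minimal hypersurfaces $\{L_s\}_{|s|<1}$ with: $L_0=\Sigma\cap B_{\rho_0}(p_\pm)$; for $s\neq0$, $L_s$ smooth, embedded, disjoint from $\Sigma$ and lying strictly on the side determined by $\sgn s$; the $L_s$ pairwise disjoint; $L_s\to\Sigma$ smoothly on compact subsets of $B_{\rho_0}(p_\pm)\setminus\{p_\pm\}$ as $s\to0$; and each $L_s\cap B_{\rho_0}(p_\pm)$ diffeomorphic to a compact portion of the one-sided minimizer $R_+$ of \cite[Theorem 2.1]{hardt1985area} when $s>0$, of $R_-$ when $s<0$. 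Shrinking $\rho_0$ we may assume $B_{\rho_0}(p_+)$ and $B_{\rho_0}(p_-)$ are disjoint and lie in the interior of $U_\delta$. Fix a small $\tau>0$; inside $B_{\rho_0/2}(p_\pm)$ declare $\Gamma_+$ to be the leaf $L_{\tau}$ and $\Gamma_-$ the leaf $L_{-\tau}$.

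On the compact set $K:=\Sigma\setminus(B_{\rho_0/2}(p_+)\cup B_{\rho_0/2}(p_-))\subset\Sigma^{reg}$ fix a unit normal $\nu$ and a smooth $f\colon\Sigma^{reg}\to(0,\delta/2)$, and let the parallel pieces be $x\mapsto\exp_x(\pm f(x)\nu(x))$ (exponential map of $g$); for $f$ small these are smooth and embedded. On each annulus $B_{\rho_0}(p_\pm)\setminus B_{\rho_0/2}(p_\pm)$ both the leaf $L_{\pm\tau}$ (by the convergence $L_s\to\Sigma$) and the parallel piece (by smallness of $f$) are graphs of small functions over $\Sigma$, so a cutoff interpolation between these graph functions glues the three pieces --- the leaf inside $B_{\rho_0/2}(p_\pm)$, the interpolation on the annuli, and the parallel push-off outside $B_{\rho_0}(p_\pm)$ --- into a single smooth closed embedded hypersurface $\Gamma_\pm$; for $\tau$ small (in terms of $f$, $\rho_0$ and finitely many derivative bounds) $\Gamma_\pm$ is disjoint from $\Sigma$ and lies strictly on its $\pm$ side. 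Let $V_\pm$ be the open region between $\Sigma$ and $\Gamma_\pm$ and $V:=V_+\cup\Sigma\cup V_-$: then $V_\pm$ is connected, $\partial V_\pm=\Sigma\cup\Gamma_\pm$ is a disjoint union, and $\overline V\Subset U_\delta$ since $\rho_0<\delta$ and $f<\delta/2$. For the minimality, any integral $7$-current $T$ in $V$ homologous to $\Sigma$ inside $V$ is, via the inclusion $V\hookrightarrow U_\delta$, homologous to $\Sigma$ inside $U_\delta$, so Theorem~\ref{Lemma: Local homological minimizing} gives $\M(T)\geq\M(\Sigma)$, with equality only for $T=\Sigma$; hence $\Sigma$ is the unique homological area-minimizer in $V$. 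In particular $\partial\currt{V_\pm}=\pm(\currt{\Gamma_\pm}-\currt{\Sigma})$, so $\Gamma_\pm$ is homologous to $\Sigma$.

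It remains to see that $\Gamma_+\cong\Gamma_-$. Each $\Gamma_\pm$ is assembled from two caps (one near $p_+$, one near $p_-$), portions of $s=+\tau$ leaves (modeled on $R_+$) for $\Gamma_+$ and of $s=-\tau$ leaves (modeled on $R_-$) for $\Gamma_-$; two collars over the interpolation annuli, each diffeomorphic to $(\mathbf C\cap\mathbb S^7)\times[0,1]\cong(\mathbb S^3\times\mathbb S^3)\times[0,1]$; and the central parallel piece over $K$, which is literally the same manifold on both sides. The symmetry $(x,y)\mapsto(y,x)$ of the Simons' cone carries $R_+$ to $R_-$, so $R_+\cong R_-$ --- this is the sole place where the extra hypothesis recorded in Remark~\ref{Remark: for the main theorem} is used --- whence the caps of $\Gamma_+$ and $\Gamma_-$ are diffeomorphic; since all pieces are glued along standard copies of $\mathbb S^3\times\mathbb S^3$ and the gluing diffeomorphisms can be arranged to agree on collars of these, the pieces assemble to $\Gamma_+\cong\Gamma_-$. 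I expect the main obstacle to be the first step: producing the Hardt--Simon foliation near $p_\pm$ for the merely conformally round metric $g$ with \emph{all} the listed properties --- smoothness and mutual disjointness of the off-center leaves, the smooth convergence $L_s\to\Sigma$, and control of their diffeomorphism type --- which is exactly the technical heart of \cite{smale1999singular} built on \cite{hardt1985area}. With that in hand, the interpolation is a routine cutoff, and the minimality and the diffeomorphism $\Gamma_+\cong\Gamma_-$ are bookkeeping.
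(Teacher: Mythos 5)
Your proposal is correct and follows essentially the same route as the paper: smooth Hardt--Simon one-sided minimizing caps near $p_\pm$ (the paper uses the boundary-value minimizers $R_t$ with boundary $S_t$ from \cite[Theorem 2.1, 5.6]{hardt1985area} rather than foliation leaves, a cosmetic difference), glued by a cutoff interpolation to constant Fermi graphs of $\Sigma$ away from the singularities, with homological minimality in $V$ inherited from $U_\delta$ and $\Gamma_+\cong\Gamma_-$ via the $(x,y)\mapsto(y,x)$ symmetry of the Simons' cone giving $R_+\cong R_-$. The technical input you flag as the main obstacle is exactly what the paper imports from \cite{smale1999singular} and \cite{hardt1985area}, so no new gap arises.
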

    
    \begin{proof}
    The proof is almost the same as the argument in \cite{smale1999singular}.

    For $\d$ small, $\Sigma$ splits $U_\d$ into two parts, call them $U_+,U_-$. Note that $Sing(\Sigma) = \{ p_+, p_- \}$. For $\sigma>0$ smaller than $\d/8$, we have $B(p_\pm, \sigma) \subset U_\d$. Consider the Fermi's coordinate $$\{(x,t) : x\in \Sigma, \ \   \rho(x)> \sigma, \ \ |t|<\d \},$$ 
    around $\Sigma$, here we denote $ \rho(x) :=d_{\mathbb S^8}(x,p_+\cup p_-) $. We denote $\Sigma_\sigma = \{ q \in \Sigma: \rho(q) < \sigma \}$, then consider the constant graph on $\Sigma \setminus \Sigma_\sigma$:
    $$\Gamma_t = graph_{\Sigma \setminus \Sigma_\sigma} t,$$
    with $|t|< \d$.
    
    Let $p=p_\pm$ and denote $ S_t = \{ (x,t): \rho(x,0) = 4\sigma \}$. By \cite[Theorem 2.1, 5.6]{hardt1985area}, $S_t$ bounds a smooth $7$-dimensional submanifold $R_t$ which is  area minimizing in $B_p(5\sigma)$, and as $t\to 0$, $R_t\to\Sigma_{4\sigma} $ in the current and Hausdorff sense. And there exists $\e_t>0$, a $C^2$ function $u_t$ on $\Sigma_{4\sigma} \setminus \Sigma_{\e_t}$ such that $R_t \setminus B(p,\e_{t})$ can be described as a graph of $u_t$.

    On the other hand, we can find a smooth cutoff function $\chi$ supported on the annulus $\Sigma_{4\sigma} \setminus \Sigma_{\e_t}$ such that 
    \[
    \chi(x) = 
    \begin{cases}
        1 & \text{for } \rho(x) \ge 3\sigma , \\
        0 & \text{for } \rho(x) \le 2\sigma .
    \end{cases}
    \]
    
    We can get a smooth hypersurface by gluing the smooth submanifold $\Gamma_t$ with $R_t$ through a function $w$ on $\Sigma \setminus\Sigma_{\e_t}$ by
    $$ w = \chi u_t + (1-\chi) t. $$

    Therefore, for each side of $\Sigma$, we can employ any small positive (resp. negative) $t$ to find a smooth hypersurface, denoted by $\Gamma_+$ (resp. $\Gamma_-$) in $ U_+$ (resp. $U_-$), which is homologous to $\Sigma$. 
    Denote $V$ the neighborhood of $\Sigma$ bounded by $\Gamma_+, \Gamma_-$, $V$ clearly has a smooth boundary. Moreover, note that $\textbf{C}$ splits $\R^8$ into two parts, by the symmetry of the Simons' cone, $R_t$ is diffeomorphic to $R_{-t}$ for all small $t$. Therefore, $\Gamma_+,\Gamma_-$ are diffeomorphic to each other.
    
    In summary, $\Sigma$ splits $V$ into two parts: $V_+,V_-$. And $\p V_+$ has two components: the smooth part $\Gamma_+$, and $\Sigma$ (similarly for $\p V_-$). Then we have $\p V= \Gamma_+ \cup \Gamma_-$, where $\Gamma_+,\Gamma_-$ are both smooth and homologous to $\Sigma$. Moreover, $\Gamma_+$ is diffeomorphic to $\Gamma_-$. We will use the set $V$ in the next subsection to construct the manifolds with singular isoperimetric regions.
    \end{proof}

\subsection{Construction  of the toric manifolds} \label{Subsection: construction of manifolds}
    \hfill
    
    Next, we will construct a collection of $8$-dimensional closed Riemannian manifolds, which we will use in the next section. Again we denote $V$, the smooth neighborhood of $\Sigma$ from the last subsection.

    \begin{Theorem}\label{Theorem: Get M(R)}
         Denote $\Gamma$ the $7$-manifold which is diffeomorphic to $\Gamma_+$ and $\Gamma_-$ by $F_\pm :\Gamma_\pm\to\Gamma $. Consider the smooth manifold defined by gluing the boundaries of $V$ and $\Gamma\times [0,R]$:
         \begin{align}
             M(R):=\Gamma \times [0,R]\cup V/\Gamma_{\pm}\times \{0,R\} \sim_{F_\pm} \tilde \Gamma_{\pm}.
         \end{align}     
         
         There exist $R_0>0$ and a one parameter family of $C^\infty$-metrics $(g_R)_{R>0}$ on $M(R)$ such that, if $V$ is as in Theorem~\ref{Theorem: V}, then for every $R>R_0$, the followings hold:
         \begin{enumerate}
             \item $V$ is isometrically embedded into $(M(R), g_R)$;
             \item $\Sigma$ is the unique homological area minimizer in $(M(R),g_R)$.
         \end{enumerate}
    \end{Theorem}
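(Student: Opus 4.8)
The plan is to build $(M(R),g_R)$ by gluing the fixed piece $V$ (with its metric $g$ from Theorem~\ref{Theorem: V}) to a cylinder $\Gamma\times[0,R]$ carrying a product metric $g_l+dr^2$, where $g_l$ is a metric on $\Gamma$ chosen ``larger'' than the induced metrics on $\Gamma_\pm$ so that the gluing can be done smoothly. First I would fix a collar: near each boundary component $\Gamma_\pm$ of $V$, the metric $g$ can be put in Fermi/normal form $g = g_{\Gamma_\pm}(r) + dr^2$ for $r$ in a one-sided interval $(-\tau,0]$ (resp. $[0,\tau)$), where $g_{\Gamma_\pm}(r)$ is a path of metrics on $\Gamma_\pm$ with $g_{\Gamma_\pm}(0)$ the induced metric. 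I would then choose a smooth monotone interpolation, on collars $\Gamma\times[0,\epsilon]$ and $\Gamma\times[R-\epsilon,R]$, between $(F_\pm)_*g_{\Gamma_\pm}(r)$ and a fixed ``large'' metric $g_l$ on $\Gamma$, keeping the $dr^2$-factor, so that the resulting metric $g_R$ on $M(R)$ is $C^\infty$, independent of $R$ on $V$ and on the two collars, and equal to $g_l+dr^2$ on the middle $\Gamma\times[\epsilon,R-\epsilon]$. This immediately gives (1): $V$ is isometrically embedded by construction. The point of ``$g_l$ large'' — concretely, scaling $g_l$ so that $\Vol_{g_l}(\Gamma)$ and the systole/diameter of $(\Gamma,g_l)$ are huge compared to $\mathcal H^7_g(\Sigma)$ — is to make competitor surfaces that wander into the cylinder expensive.

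For part (2) I would argue by contradiction and a compactness/slicing argument. Let $T$ be a homological area minimizer in $(M(R),g_R)$ in the class $[\Sigma]$; such a $T$ exists by standard compactness for integral currents and is area-minimizing, hence regular away from a set of codimension $\geq 7$, so in particular (dimension $7$ in an $8$-manifold) it is a smooth embedded minimal hypersurface off finitely many points, with density bounds giving uniform area. Since $[\Sigma]$ is dual to the class of the ``meridian'' $S^1$-factor coming from the $[0,R]$ direction (the torus $M(R)$ retracts onto a circle transverse to $\Gamma$), $T$ must separate the two ends and in particular $T\cap(\Gamma\times\{s\})\neq\emptyset$ for a.e. $s\in[0,R]$; by the coarea inequality $\mathbf M(T)\geq \int_0^R \mathcal H^6_{g_R}(T\cap(\Gamma\times\{s\}))\,ds$. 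If $T$ meets the middle region $\Gamma\times[\epsilon,R-\epsilon]$, then — because each slice that is homologically nontrivial in $\Gamma$ has $\mathcal H^6$-measure bounded below by a constant depending only on $(\Gamma,g_l)$ (an isoperimetric/systolic lower bound on $\Gamma$, made large by the scaling), OR $T$ bounds within some slice and one can push it off — one gets either $\mathbf M(T)\gtrsim (R-2\epsilon)\cdot c(g_l)$, or one can surger $T$ across a good slice to reduce area while staying in the homology class. Choosing $R_0$ so that this lower bound exceeds $\mathbf M_{g}(\Sigma)$ (which is a fixed number, $\Sigma\subset V$ being $R$-independent), we conclude that for $R>R_0$ any minimizer $T$ is supported in $\overline V$ together with the two collars, i.e. in a region isometric to a fixed piece of $(M(R),g_R)$ independent of $R$. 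Then Theorem~\ref{Theorem: V} (unique homological minimality of $\Sigma$ in $V$, together with the fact that the collars are foliated by the graphs $\Gamma_t$, which are calibrated/area-decreasing toward $\Sigma$) forces $T=\Sigma$; uniqueness follows from the uniqueness statement in Theorem~\ref{Lemma: Local homological minimizing}/Theorem~\ref{Theorem: V} plus strict stability of the Simons cone.

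\textbf{Main obstacle.} The delicate point is the surgery/cut-and-paste step in the middle region: making precise that a homological minimizer $T$ which enters $\Gamma\times[\epsilon,R-\epsilon]$ either has area $\gtrsim R$ or can be replaced by a strictly cheaper competitor in the same class. This requires a clean slicing argument — choosing a good slice $\Gamma\times\{s\}$ on which $\langle T,\Gamma\times\{s\}\rangle$ is an integral current with controlled mass and boundary, then either (a) if $[T\cap(\Gamma\times\{s\})]\neq 0$ in $H_6(\Gamma)$, invoking the (scaled-up) lower volume bound on nontrivial $6$-cycles in $(\Gamma,g_l)$, or (b) if it is null-homologous, replacing $T$ on one side of the slice by the product region $\Gamma\times[0,s]$ (whose boundary contributes only one copy of $\Gamma\times\{0\}\cong\tilde\Gamma_-$, staying in the class $[\Sigma]$) and checking the area drops. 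Keeping the metric on the collars and $V$ exactly $R$-independent is what lets the fixed number $\mathbf M_g(\Sigma)$ beat the growing lower bound; I expect the bookkeeping of homology classes under the gluing $\sim_{F_\pm}$, and verifying $[\Gamma\times\{s\}]=[\Sigma]$ in $H_7(M(R))$, to be the other place where care is needed.
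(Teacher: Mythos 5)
Your construction of $g_R$ and the proof of (1) follow essentially the paper's route: put the metric in Fermi form near $\Gamma_\pm$, interpolate in a collar to a product metric $g_l+dr^2$ with $g_l$ ``large'', keep the metric on $V$ untouched, and glue in the cylinder; this part is fine. One caveat: in the paper the role of ``$g_l$ large'' is metric domination ($F_\pm^*g_l\ge 2g$ on the collar), which guarantees that the interpolated metric dominates the original $g$ there, so that any competitor supported in the enlarged neighborhood $V(\e)\subset U_\delta$ has new mass at least its $g$-mass $\ge \M_g(\Sigma)$, i.e.\ $\Sigma$ remains uniquely homologically minimizing after the modification. Your substitute justification --- that the collars are ``foliated by the graphs $\Gamma_t$, which are calibrated/area-decreasing toward $\Sigma$'' --- is not correct: the Fermi-parallel hypersurfaces $\Gamma_t$ are not minimal, hence not calibrated, and nothing in your setup controls competitors that exploit the interpolation region. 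Your alternative reason (volume/systole of $(\Gamma,g_l)$ huge compared to $\cH^7_g(\Sigma)$) is used in the paper too, but only later, in the isoperimetric analysis, not to establish (2).

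The genuine gap is in (2). The paper disposes of it in one line: after observing the domination property above, it invokes \cite[Lemma 5]{smale1999singular} to conclude that for $R$ large $\Sigma$ is the unique homological minimizer in $(M(R),g_R)$. You instead attempt a direct slicing/surgery proof, and as written it does not go through. First, the assertion that any cycle $T$ in the class $[\Sigma]$ satisfies $T\cap(\Gamma\times\{s\})\neq\emptyset$ for a.e.\ $s\in[0,R]$ is false: $\Sigma$ itself, and any competitor contained in $V$, misses every slice; what the Poincar\'e duality with the circle class gives is only that $\spt T$ meets every loop generating $H_1(M(R))$, and such loops pass through $V$, so this does not force $T$ into the cylinder. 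Second, the dichotomy your argument then hinges on --- either the slices $\langle T,\pi,s\rangle$ are nontrivial in $H_6(\Gamma)$ on a set of $s$ of length comparable to $R$ (giving $\M(T)\gtrsim R$), or a cut-and-paste across a good slice produces a strictly cheaper competitor in the same class, after which one must still check that a competitor supported in $V(\e)$ together with the collars is homologous to $\Sigma$ \emph{inside} that region so that Theorem~\ref{Theorem: V}/Theorem~\ref{Lemma: Local homological minimizing} applies --- is precisely the technical content of Smale's Lemma~5, and you explicitly leave it unproven (your ``main obstacle''). So the proof of (2) is incomplete; it can be closed either by carrying out that slicing and cut-and-paste argument with the homological bookkeeping in detail, or, as the paper does, by the domination observation plus a direct appeal to \cite[Lemma 5]{smale1999singular}.
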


    The idea is the following: suppose $g_{l}$ is some Riemannian metric of $\Gamma$ to be determined. At first, we conformally change (enlarge) the metric $ g$ of $V$ in $U_\d$ near $\Gamma_+,\Gamma_-$ such that the outside of a tubular neighborhood of $\Gamma_+$ (and $\Gamma_-$) is isometric to the cylindrical metric $(\Gamma \times [0,\e],g_l+dr^2)$ for sufficiently small $\e$, . Additionally, we require that $\Sigma$ is still homological area-minimizing in $(V,\wtilde g)$, where $\wtilde g$ is the metric after the conformal change.

    Finally, we glue $(V,\wtilde g)$ with $(\Gamma\times [0, R], g_l +dr^2)$ along the boundaries respectively.
    Denote $$ \tilde \Gamma_+ := \Gamma\times \{ 0\}, \qquad \qquad  \tilde \Gamma_- := \Gamma \times \{ 1\} .$$ There is a Riemannian metric on $M(R)$ depending on the length $R$. And we denote 
    $$(M(R),g_R):=\Gamma \times [0,R]\cup V/\Gamma_{\pm} \sim \tilde \Gamma_{\pm}.$$

    \begin{proof}[Proof of Theorem~\ref{Theorem: Get M(R)}]

    For each $i=\pm$, consider $U_i(\supset \Gamma_i)$ the Fermi's coordinate $(x,t)$ for $x\in \Gamma_{i}$, here choosing $t$ with the positive direction towards outside of $\Sigma$, then there exists an $\e>0$ such that 

    \begin{itemize}
        \item[(1)] for $|t|\le \e$, we have $(x,t) \in U_{i}$ respectively;
        \item[(2)] there exists an $\e_1>0$ such that
        $$\inf_{|t|\le \e}\inf_{ x\in \Gamma_{\pm} } 
        dist((x,t), \Sigma )>\e_1.$$
    \end{itemize}
    
    For each small $t$, we define
    $$\Gamma_{i}(t):= \{ (x,t):\: x\in \Gamma_i \}, $$
    again here we define the positive signs for each $\Gamma_\pm$ the side opposite with $\Sigma$, so each $\Gamma_i(t)$ forms a layer in $U_i$.
    
    Note that for any $|t|\le \e$, $\Gamma_+(t),\Gamma_-(t)$ bound an open neighborhood of $\Sigma$ as well. For $t= -\e$, we denote $V_0$ the neighborhood of $\Sigma$ with boundary $\Gamma_+(-\e) \cup \Gamma_- (-\e) $. In another word, $\Gamma_+(-\e), \Gamma_- (-\e) $ split the neighborhood $U_\e$ into three parts, and $\Sigma$ lies in the middle part. 
    
    For $-\e < \sigma\le \e$, denote
    \begin{align*}
        V_+(\sigma)&:= \{(x,t):\: x\in \Gamma_+ , \ -\e\le t<\sigma \};\\
        V_-(\sigma)&:= \{(x,t):\: x\in \Gamma_- , \ -\e\le t<\sigma \}.
    \end{align*}
    
    We can denote the neighborhood (depending on $\sigma$) by 
    \begin{align}\label{equation: V(sigma)}
        V(\sigma):= V_0 \cup V_+(\sigma) \cup V_-(\sigma). 
    \end{align}

    Note that for each small $\sigma$, $V(\sigma)$ is a neighborhood of $\Sigma$ with two smooth boundaries $\Gamma_i(t)$ such that each $\Gamma(t)$ is homological to $\Sigma$. In addition, by Theorem~\ref{Lemma: Local homological minimizing}, $\Sigma$ is uniquely homological area-minimizing in $V(\sigma)$ for any $-\e \le \sigma \le \e$.

    In addition, we observe that $V : = V(0).$ Following the ideas at the beginning of this section, we will glue the smooth neighborhood $V(\e)$ along the boundary with a manifold (with boundary) endowed with a cylindrical metric.

    Now, we will keep the metric on $V_0$ and deform the metrics on $V_\pm(\e)$ to the cylindrical metric (near $\Gamma_{\pm}(\e)$). Note that for each $i=\pm$, under the Fermi coordinates in $V_i(\e)$, the metric has the form:
    $$ g(x,t) = g_i (x,t) + \eta_i(x,t)dt^2 ,$$
    where for each small $t$, $g_i(\cdot, t)$ is a metric on $\Gamma_i$ respectively, and $\eta_i$ is a smooth positive function.

    In order to construct a deformation of the metric on $V(\e)$, we first define new metrics on $\Gamma_+(\e),\Gamma_-(\e)$. Note that $\Gamma_+(\e),\Gamma_-(\e)$ are both diffeomorphic to $\Gamma$. Consider $g_l$ the Riemannian metric on $\Gamma$ and the diffeomorphisms $F_\pm$:
    $$ F_+ : \Gamma_+(\e) \longrightarrow \Gamma(=(\Gamma,g_l)), \ \ \ \ F_- : \Gamma_-(\e)  \longrightarrow \Gamma,$$
    such that for any $x\in \Gamma_\pm (\e)$ and any $v\in T_x \Gamma_\pm(\e)$, we have
    $$ F_\pm^* g_l(v,v) \geq 2 g(v,v), $$ 
    where $g$ denotes the original metric in $V(\e)$.

    To construct a new metric on $V(\e)$, for $i = \pm$, we define the metric $\overline{g}_i$ on $\Gamma_i$ and the number $\overline \eta_i$ by

    \begin{align}\label{equations:enlarge metric}
        \overline g_i (x) &:= F^*_i (g_l) (x),\\
        \overline \eta_i &:= \max_{\overline V_i(\e)} \eta_i(x,t).
    \end{align}
    
    Denote the cutoff function $\phi:\R \to \R$ a smooth non-negative function such that
    \[
    \phi(x)=
    \begin{cases}
        0 & t\le 0;\\
        1 & t\ge \e/2,
    \end{cases}
    \]
    
    Then we can define new metrics $\wtilde g_i$ on $V_i(\e)$, where $i=+$ or $-$, by the following:

    \begin{align}\label{equation: interpolation metric}
        \wtilde g_i(x,t)=&(1-\phi(t))g_i(x,t) +\phi(t) \overline g_i(x)\\
        &+[(1-\phi(t))\eta_i(x,t) +\phi(t)\overline \eta_i] dt^2. \nonumber
    \end{align}
    So clearly, each of\, $\wtilde g_+$ and $ \wtilde g_-$ is simply an interpolation such that, as $t$ increases from $0$ to $\e/2$, the original metric deforms to the cylindrical metric. Moreover, we leave the metric unchanged when $t$ is non-positive.

    Now we can define a new metric $\wtilde g$ on the whole neighborhood $V(\e)$ by the following:
    \[
    \wtilde g(p)=
    \begin{cases}
        g(p), & p\in V_0 , \\
        \wtilde g_i(p), & p\in V_i(\e),\ i=+ \ \text{or}\ -.
    \end{cases}
    \]

    Therefore, we can construct the collection of manifolds $\{M(R)\}_R$. Note that $V(\e)$ has cylindrical metric on $V(\e)\setminus V(\e/2)$, and $\Gamma_\pm(\e)$ are both isometric to $\Gamma$. Then consider the maps $F_\pm:\Gamma_\pm\to \Gamma$ defined above.
    Denote $W(R)= [0,R] \times \Gamma$ for $R>0$ a Riemannian manifold with the boundary equipped with the product metric. Along with the diffeomorphisms $F_\pm$, we glue $\Gamma_+(\e)$ with $\{0\} \times \Gamma$ and glue $\Gamma_-(\e)$ with $\{R\} \times \Gamma$. Then we get a connected smooth Riemannian manifold depending on the positive number $R$, denote it by $(M(R), g_R)$.

    Finally, note that under the Riemannian metric $\wtilde g$, \eqref{equations:enlarge metric}-\eqref{equation: interpolation metric} show that we leave the metric in $V$ unchanged and enlarge the metric on $V_+(\e),V_-(\e)$. So $\Sigma$ is still the unique homological area minimizer in $ V(\e)$. Moreover, by \cite[Lemma 5]{smale1999singular}, we see that for sufficiently large $R>0$, under the metric defined above, $\Sigma$ is the unique homological area minimizer in $M(R)$. So we construct a Riemannian metric of $M(R)$ such that $(V,\wtilde g)$ (the smoothed neighborhood from Theorem~\ref{Theorem: V}) isometrically embedded into $M(R)$.

    \end{proof}

\section{Proof of the main Theorem~\ref{Theorem: main theorem}}\label{Section: Proof of the main theorem}

    In section~\ref{Subsection: construction of manifolds}, we have constructed a collection of closed manifolds that we need later as ambient spaces. Then, in section~\ref{subsection: Construction of singular isoperimetric region}, we will construct the singular isoperimetric region in the corresponding manifold. In this section, we will redefine $V$ for convenience by
    \begin{align}\label{equation: redefine V}
        V := V(\e)
    \end{align}
    where $V(\e)$ is defined in \eqref{equation: V(sigma)}.

\subsection{Construction of singular isoperimetric region}\label{subsection: Construction of singular isoperimetric region}
\hfill
\medskip

    \begin{Theorem}\label{Theorem: exists singular region}
        Let $M(R)$ be the family of manifolds constructed in Theorem~\ref{Theorem: Get M(R)}. There exists $R_1>0$ such that for any $R>R_1$, there is $t_0\in (0,R)$ such that the boundary of the two unique isoperimetric regions (the one and its complement) with volume $|M(R)|_g/2$ is of the form
        $$\Sigma \cup [\Gamma\times \{t_0\}]. $$
    \end{Theorem}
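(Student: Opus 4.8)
The plan is to combine a compactness/limit argument (as $R\to\infty$) with the uniqueness and rigidity built into $M(R)$. For each $R$ let $\O_R$ be a $|M(R)|_{g_R}/2$-isoperimetric region, which exists by standard compactness for sets of finite perimeter on a closed manifold. First I would establish the upper bound $\cI(M(R),g_R,|M(R)|/2)\le \cH^7(\Sigma)+\cH^7(\Gamma,g_l)$: the competitor is $V_+\cup(\Gamma\times[0,t])$ (suitably reflected if needed so that its volume is exactly half), whose boundary is $\Sigma\cup(\Gamma\times\{t\})$ for an appropriate $t=t(R)$ — here one uses that the volume of the cylindrical part $\Gamma\times[0,R]$ grows linearly in $R$, so the half-volume slice sits at some interior $t_0$, while the perimeter contribution of $\Gamma\times\{t_0\}$ is exactly $\cH^7(\Gamma,g_l)$ independent of $R$ and the contribution of $\Sigma$ is fixed. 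This shows the minimal perimeter stays \emph{bounded} in $R$.

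Next comes the structural heart of the argument. Since $\P_{g_R}(\O_R)$ is uniformly bounded while $|M(R)|\to\infty$, the region $\O_R$ must be ``mostly'' a union of whole slices of the cylinder: more precisely, on the cylindrical piece $W(R)=\Gamma\times[0,R]$ the coarea/slicing inequality forces $\P_{g_R}(\O_R)\ge \int \cH^7(\p^*\O_R\cap(\Gamma\times\{s\}))\,ds$, and since each nonempty, non-full slice of $\O_R\cap W(R)$ costs at least a definite amount of relative perimeter (by the isoperimetric inequality on the closed manifold $\Gamma$, quantitatively controlled below its own half-volume), a bounded perimeter budget allows only boundedly many ``transition zones''. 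By the constancy theorem the boundary inside the long cylinder is (up to small total measure, then exactly, by the monotonicity/clearing-out of CMC boundaries) a finite disjoint union of slices $\Gamma\times\{t_i\}$; counting volume forces essentially one such slice $\Gamma\times\{t_0\}$ separating the two ends. What remains is to understand $\O_R\cap V$: its boundary there is a $\cI$-minimizing, hence CMC, hypersurface homologous to $\Sigma$ (once we know one end of the cylinder is filled and the other empty), with boundary on $\Gamma_\pm$, and it is area-minimizing \emph{in the homology class} up to the mean-curvature correction term $\lambda|\O|$ — but as $R\to\infty$ the Lagrange multiplier $\lambda=-h$ must tend to $0$ (since a nonzero $\lambda$ would change the perimeter of the half-volume slice competitor at order $R$, contradicting the bounded budget, or more directly: $h$ is the mean curvature of $\Gamma\times\{t_0\}$, which is $0$ for the product metric). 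Hence in the limit $\p\O\cap V$ is genuinely homologically area-minimizing in $V$, so by Theorem~\ref{Theorem: V} (uniqueness in $V$) it must be $\Sigma$.

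Finally I would upgrade the limiting statement to an exact one for large finite $R$ via the strict stability/uniqueness already encoded in the construction, plus a contradiction-compactness argument: if for a sequence $R_j\to\infty$ the isoperimetric region $\O_{R_j}$ were \emph{not} of the claimed form $\Sigma\cup(\Gamma\times\{t_0\})$, then extracting a limit (after translating in the cylinder direction so the ``transition'' is centered) produces a homological area minimizer in a limit space that is either the cylinder $\Gamma\times\R$ — on which the only minimizers are slices, forcing $\O_{R_j}\cap W$ to be a slice for $j$ large — or a copy of $V$ with its completed cylindrical ends, on which Theorem~\ref{Theorem: Get M(R)}(2) / Theorem~\ref{Theorem: V} forces the minimizer to be $\Sigma$; in both cases, combined with the volume constraint, we contradict the assumption. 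The main obstacle I expect is exactly this last gluing/compactness step: making rigorous that a perimeter-almost-minimizer on the long neck decomposes cleanly into ``slice part'' plus ``$V$ part'' with \emph{no} extra components or bubbling, and that the Lagrange multiplier genuinely vanishes in the limit so that the $V$-part is area-minimizing rather than merely CMC — this is where one needs the linear volume growth of the neck, the positive ``perimeter gap'' for nontrivial slices of $\Gamma$, and the uniqueness from Smale's construction to all work together. (A cleaner route to $\lambda\to 0$: the slice $\Gamma\times\{t\}$ is minimal in the product metric, so the half-volume competitor has $h=0$; any isoperimetric region has $|h|$ bounded by the competitor's perimeter-to-volume ratio, which is $O(1/R)$ after optimizing the position, forcing $h\to 0$.)
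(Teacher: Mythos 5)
Your competitor and the resulting uniform bound $\P(\O_R)\le \M(\Sigma)+\M(\Gamma)$ agree with the paper, but after that your route diverges and has two genuine gaps. First, the assertion that ``by the constancy theorem the boundary inside the long cylinder is a finite disjoint union of slices'' is not justified: inside $\Gamma\times[0,R]$ the boundary $\p\O_R$ is only a constant-mean-curvature hypersurface with an a priori unknown Lagrange multiplier, and the constancy theorem gives no such rigidity; CMC hypersurfaces with small nonzero $h$ in a product need not be slices, so this step cannot be ``exact'' without further argument. Second, and more seriously, your main mechanism --- $\lambda\to 0$ as $R\to\infty$, pass to a limit, conclude the $V$-part of the limit is homologically area minimizing, invoke uniqueness of $\Sigma$ --- only controls the limit object. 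The statement to be proved is an \emph{exact} identity $\p\O_R=\Sigma\cup(\Gamma\times\{t_0\})$ for every finite $R>R_1$, and uniqueness of $\Sigma$ as a homological minimizer does not by itself exclude nearby volume-constrained minimizers whose boundary is a CMC hypersurface with small nonzero $h$ close to, but different from, $\Sigma$; you would need an isolation or rigidity statement for such CMC perturbations of the \emph{singular} hypersurface $\Sigma$, which you do not supply (and which is delicate precisely because $\Sigma$ is singular). You acknowledge this as the ``main obstacle,'' but it is exactly the content of the theorem, so the proposal as written does not close it.

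The paper avoids both issues by a direct finite-$R$ argument that never analyzes the Lagrange multiplier or takes limits of the minimizers: Lemma~\ref{Lem: curvature control for isoperimetric regions} gives a uniform mean curvature bound, so the monotonicity formula yields uniform diameter bounds and a uniform bound on the number of components of $\p\O_R$, and a volume count shows at least two components represent the nontrivial class $[\Gamma]\in H_7(M(R),\Z_2)$ (Lemma~\ref{Lemma:Perimeter,components,diameter}). Splitting components into those meeting $\overline V$ and those contained in the cylinder, every nontrivial component in the cylinder is a cycle in the tube and hence has mass at least $\M(\Gamma)$ (slices are homologically minimizing there, and $\M(\Gamma)>\M(\Sigma)$ by the enlarged metric $g_l$ chosen in Theorem~\ref{Theorem: Get M(R)}), while every nontrivial component meeting $\overline V$ has mass at least $\M(\Sigma)$ because $\Sigma$ is the \emph{unique} homological area minimizer in $(M(R),g_R)$ --- a mass comparison valid for an arbitrary cycle in the class, irrespective of its mean curvature. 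Combined with the upper bound $\M(\Sigma)+\M(\Gamma)$ this forces exactly one component of each type, with equality of masses, and uniqueness then identifies them as $\Sigma$ and a slice $\Gamma\times\{t_0\}$, for every sufficiently large finite $R$. This homological mass-comparison step applied to each individual boundary component is the key idea your proposal is missing; if you want to salvage your approach, you should replace the $\lambda\to0$ limit by exactly this comparison.
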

    This theorem directly implies Theorem~\ref{Theorem: main theorem}.

    \begin{Example}
        Consider the torus $\mathbb S^7\times \mathbb S^1(R)$ with product metric for large length $R$. \cite{chodosh2022riemannian} shows that for large $R$, the boundary of an isoperimetric region with half volume is of the form 
        $$\mathbb S^7 \times \{0\} \cup \mathbb S^7 \times \{\pi R\}. $$
    \end{Example}

    To prove Theorem~\ref{Theorem: exists singular region}, we want to have a uniform bound on the mean curvature of isoperimetric regions with volume bounded away from zero and that of the manifold. For a closed manifold with dimension $2\le n\le 7$, this lemma is proved in \cite{chodosh2022riemannian}. In the higher dimensional cases, the boundary can have singularities; we want to have the mean curvature bounded in $Reg(\p \O)$. Adapting the argument of \cite[Lemma C.1]{chodosh2022riemannian} shows the following lemma:

    \begin{Lemma}[\textbf{cf.} {\cite[Lemma C.1]{chodosh2022riemannian}} and \cite{ros2010stable,cheung1991non}] \label{Lem: curvature control for isoperimetric regions}
        For $ n \ge 2$, fix $\delta>0$, and $(M^{n},g)$ a closed Riemannian manifold with $C^3$-metric, there is $C = C(M,g,\delta) <\infty$ so that if $\O\in \cC(M, t)$ is an isoperimetric region with volume $|\O|_g \in (\delta, |M|_g - \delta)$, then the mean curvature of $Reg(\p \O)$ satisfies $|H|\le C$.
    \end{Lemma}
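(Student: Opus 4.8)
The plan is to argue by contradiction using a blow-up (rescaling) argument, following the strategy of \cite[Lemma C.1]{chodosh2022riemannian} but being careful that the regular part of the boundary may now accumulate on a singular set. Suppose the conclusion fails: then there is a sequence of isoperimetric regions $\O_j \in \cC(M,t_j)$ with $t_j \in (\delta, |M|_g - \delta)$ whose (constant, by the Remark following the first variation computation) mean curvatures $H_j := |H_{\O_j}|$ satisfy $H_j \to \infty$. Fix a point $x_j \in Reg(\p\O_j)$; after passing to a subsequence we may assume $x_j \to x_\infty \in M$. Rescale the metric around $x_j$ by the factor $H_j$: in geodesic normal coordinates at $x_j$ set $g_j := H_j^2 \exp_{x_j}^* g$, so that $(B_{H_j \mathrm{inj}(M)}(0), g_j) \to (\R^n, g_{eucl})$ in $C^3_{loc}$. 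Under this rescaling the dilated region $\widetilde\O_j$ is still isoperimetric in $(B_{H_j r}(0), g_j)$ for the corresponding dilated volume (isoperimetric regions are preserved by scaling, since scaling acts homogeneously on both perimeter and volume, and an isoperimetric region restricted to a ball is $\Lambda$-minimizing there), and the mean curvature of $Reg(\p\widetilde\O_j)$ is now identically $1$.

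Next I would invoke the standard compactness theory for $\Lambda$-minimizing (equivalently, almost-minimizing) boundaries — e.g. the theory behind \cite{gonzalez1983regularity,almgren1976existence} together with monotonicity and the Allard-type regularity used there — to extract a subsequential limit $\widetilde\O_\infty$, a set of locally finite perimeter in $\R^n$ which is locally area-minimizing with a free boundary condition absorbed into the limit (the volume constraint disappears in the limit because the Lagrange multiplier $\Lambda_j = H_j$, after rescaling, becomes $1$, which is exactly the statement that $\p\widetilde\O_\infty$ has constant mean curvature $1$... wait — here is the subtlety). In fact the cleaner route, and the one taken in \cite{chodosh2022riemannian}, is: because $H_j\to\infty$, after rescaling so that $H_j \equiv 1$ but now the \emph{ambient} metric flattens to Euclidean, the limit hypersurface $\p\widetilde\O_\infty$ is a constant-mean-curvature-$1$ hypersurface in $\R^n$ which is \emph{stable} in the volume-preserving sense (stability passes to the limit by Fatou/lower semicontinuity applied to the second variation inequality in Definition~\ref{Definition: volume preserving stable}, using the ambient Ricci term $Ric_{g_j}\to 0$). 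The classification of complete volume-preserving stable CMC hypersurfaces in $\R^n$ — they must be round spheres (or hyperplanes, but those have $H=0$), by \cite{barbosa1988stability} together with the non-compact extensions in \cite{ros2010stable,cheung1991non} — then gives that $\p\widetilde\O_\infty$ is a single unit sphere. But a unit sphere in $\R^n$ has a definite, finite diameter, whereas the rescaled domains $(B_{H_j r}(0),g_j)$ exhaust all of $\R^n$; the volume enclosed by $\p\widetilde\O_\infty$ is then the finite number $\omega_n$, which contradicts the fact that $|\widetilde\O_j|_{g_j} = H_j^n |\O_j|_g \to \infty$ (since $|\O_j|_g \geq \delta$), and likewise the complement cannot be the unbounded component of finite volume. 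This contradiction proves $H_j$ must stay bounded.

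The step I expect to be the main obstacle is making the compactness and the passage-to-the-limit of stability rigorous \emph{in the presence of the singular set}, i.e. justifying that no mass or curvature is lost along $Sing(\p\O_j)$ during the blow-up. Concretely: one must check (i) that the density bounds and the monotonicity formula hold uniformly across the sequence so that $\cH^n\mres\p\widetilde\O_j$ converges as Radon measures with no concentration on a lower-dimensional set — this follows from the uniform $\Lambda$-minimizing property and the fact that, by optimal regularity, $\cH^{n-2}(Sing(\p\O_j))=0$ with codimension-$7$ singularities in our setting; (ii) that the second-variation inequality, which a priori is only tested against vector fields compactly supported in $Reg(\p\O_j)$, still yields volume-preserving stability of the limit — here one uses a logarithmic cutoff argument exploiting the codimension-$\geq 7$ (in particular $\geq 3$) bound on the singular set, exactly as in the cutoff arguments cited in the Remark after the first variation and in \cite{chodosh2022riemannian}, so that the singular set is capacity-negligible for the relevant Sobolev space. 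Once these two points are in place the argument is routine; the geometric heart is simply "$H_j\to\infty$ forces a blow-up limit that is a unit sphere, whose enclosed volume is finite, contradicting $|\O_j|_g\geq\delta$."
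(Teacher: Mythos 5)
Your setup (contradiction, blow-up at scale $\lambda_j=|H_j|$, limit a locally isoperimetric region in $\R^n$ with $|H|=1$, volume-preserving stable) matches the paper, but the decisive step of your argument has a genuine gap: you invoke a classification theorem stating that every complete volume-preserving stable CMC hypersurface in $\R^n$ is a round sphere, citing Barbosa--do Carmo--Eschenburg together with \cite{ros2010stable,cheung1991non}. No such theorem is available in the generality you need. Barbosa--do Carmo--Eschenburg treats \emph{compact} smooth hypersurfaces; the nonexistence results for complete noncompact stable CMC hypersurfaces with $H\neq 0$ in Euclidean space are known only in low dimensions or under additional area-growth hypotheses, and all of them assume smoothness, whereas here the blow-up limit $\p\wtilde{\O}$ may carry a singular set (this is precisely the regime $n\ge 8$ of the lemma). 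This is not the ``routine'' technical point you flag at the end; it is the mathematical heart of the noncompact case. The paper does not classify anything: it shows that since $|\wtilde H|=1$ forces $|\wtilde A|^2\ge 1/n$, volume-preserving stability upgrades to \emph{strong} stability outside a compact set, then runs a cutoff/test-function computation (with an extra covering argument using $\cH^{n-3}(Sing(\p\wtilde\O))=0$ to make the cutoffs admissible across the singular set) to bound the area of $\p\wtilde\O$ in large annuli by $C(1+\rho^{1-n}\cH^{n-1}(\p\wtilde\O\cap B_{2\rho}))$; the locally isoperimetric property gives the growth bound $\P(\wtilde\O;B_\rho)\le C\rho^{n-1}$, while the monotonicity formula for a noncompact CMC-$1$ boundary forces the annular area to diverge --- contradiction. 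In effect the paper \emph{proves} the nonexistence statement it needs, under the area-growth control it can actually verify, rather than quoting it.

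A secondary problem is your contradiction in the compact (sphere) case: from purely local convergence $\wtilde\O_j\to\wtilde\O_\infty$ you cannot conclude that the finite enclosed volume $\omega_n$ of the limit contradicts $|\wtilde\O_j|_{g_j}\to\infty$, since volume can sit in components far from the basepoint (e.g.\ $\O_j$ a union of many tiny balls). The paper closes this case differently: if every blow-up limit (over all choices of basepoints) is a unit ball, then $\O_j$ is close to a union of near-geodesic spheres of radius $\sim\lambda_j^{-1}$; since $|\O_j|\ge\delta$, there must be at least $c\,\delta\lambda_j^{n}$ of them, so $\P(\O_j)\gtrsim \delta\lambda_j\to\infty$, contradicting the uniform perimeter bound for isoperimetric regions of pinched volume. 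If you replace your classification step by the paper's stability-plus-area-growth argument (or an equivalent quantitative nonexistence argument valid for singular limits), and repair the compact case by this counting, your outline becomes a proof; as written, both the noncompact and the compact branches are incomplete.
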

    \begin{proof}
        \cite[Lemma C.1]{chodosh2022riemannian} proves the case $2\le n\le 7$. Next, we assume $n\ge 8$, the proof is similar to \cite[Lemma C.1]{chodosh2022riemannian}.
        
        Assuming not, we would have a sequence of isoperimetric regions $\O_j \subset (M,g)$ with $|\O_j|_g \in (\d, |M|_g -\d)$ with divergent constant mean curvature $H_j$ such that $\lambda_j :=| H_j| \to \infty$.
        
        Choosing any $x_j \in \p \O_j$, we can rescale the metric in $\lambda_j$ at the point $x_j$, denote $\wtilde g_j = \lambda_j^2g $. So we have $(M, \wtilde g_j, x_j)  $ converges in $C^3_{loc}$ to the flat metric on $(\R^n,0)$. Also, for each $j$, we have $\wtilde \O_j$ an isoperimetric region in $(M, \wtilde g_j)$. Passing to a subsequence, there is a locally isoperimetric region\footnote{ We say $\O$ is a locally isoperimetric region if for any $R>0$ and $\wtilde \O $ with $\O\Delta \wtilde \O \Subset B_R$ and $|\O \cap B_R| = |\wtilde \O \cap B_R|$, we have $\P(\O,B_R) \le \P(\wtilde \O, B_R)$. } $\wtilde{\O}$ in $\R^n$ such that $\wtilde \O_j$ locally converges to $\wtilde \O$ in the varifold sense. 
        In addition, for any $p \in Reg(\p \wtilde \O)$, there exists a neighborhood $B(p,r)$ around $p$ for some $r>0$ such that $\p \wtilde\O_j \cap B(p,r) \subset Reg(\p \wtilde \O_j)$, and therefore $Reg(\p \wtilde \O_j)\cap B(p,r)$ converges in $C^{2,\a} $ to $Reg(\p \wtilde \O) \cap B(p,r)$. Therefore, the mean curvature of $\wtilde \O$ is $\pm 1$. Moreover, $\wtilde \O$ is volume-preserving stable. 
        
        Next, we claim that $\wtilde \O$ is non-compact. Suppose not, say $ \wtilde \O$ is compact for any choices of $x_j \in \p \O_j$. Then $\wtilde \O$ is compact and has to be an isoperimetric region in $\R^n$. By \cite[Theorem 14.1]{maggi2012sets}, it has to be a ball. By the mean curvature $|\wtilde{H}|=1$, $\wtilde{\O}$ is a unit ball. So $\O_j$ will be regular too. Note that $(\O_j)_j$ has a uniform lower and upper bound of volume, as $\lambda_j \to \infty$, the volume of $(\wtilde\O_j)_j$ in $\wtilde{g}_j$ will converge to infinity. So $(\p\wtilde\O_j,\wtilde{g}_j)$ would be close to a union of an increasing number of regions close to the geodesic spheres. Therefore, for $\lambda_j$ sufficiently large, $\frac{1}{2\lambda_j}$ will be the lower bound of the diameters of the balls of $\O_j$. Denote $V>0$ the lower bound of the volumes of $(\O_j)_j$. So for any large $j$, it at least has $V\lambda_j^{n}$ many balls. So we have
        $$ \P(\O_j) \ge C(n)\frac{V}{\lambda_j^{n-1}} \P(B_{\lambda_j}) \ge  C(n)\frac{V}{\lambda_j^{n-1}} \lambda_j^{n}. $$
        Therefore, as $\lambda_j \to \infty$, $\P(\O_j) \to \infty$, contradiction. Therefore, we can assume $\p \wtilde \O$ is non-compact.

        \medskip
        Denote $\wtilde H:=$ the mean curvature of $Reg(\p\wtilde \O)$, and $\wtilde A:=$ the second fundamental form of $Reg(\p\wtilde \O)$. Because $|\wtilde H| =1 $, we have $ |\wtilde A|^2 \ge \frac1n$. Therefore, for any $\phi\in C^1_c(Reg(\p \wtilde \O ) )$ with $\int_{Reg(\p \wtilde \O )} \phi \, d\cH^{n-1} = 0$, we have
        $$ \int_{Reg(\p \wtilde \O )}  \phi^2 \, d\cH^{n-1} \le  \int_{Reg(\p \wtilde \O )} n|\wtilde A|^2 \phi^2 \, d\cH^{n-1}
        \le
        n\int_{Reg(\p \wtilde \O )} |\nabla\phi|^2 \, d\cH^{n-1}.  $$
        Following the remark from \cite{fischer1985complete} and \cite[Proposition 2.2]{barbosa2000eigenvalue}, we see that $\p \wtilde \O$ is \textbf{strongly} stable outside of a compact set, i.e., there exists a large enough $R>0 $ such that for any $\phi\in C^1_c(Reg(\p \wtilde \O ) \setminus B_R )$, we have
        \begin{align}\label{equation: non compact stability inequlity}
            \int_{Reg(\p \wtilde \O )}  \phi^2 \, d\cH^{n-1} \le n \int_{Reg(\p \wtilde \O )} |\nabla \phi|^2 \, d\cH^{n-1}.
        \end{align}

        Consider the $C^1$ radial cutoff function $\phi$ in $\R^n$ such that: fixing any $\rho>R+1$,
        \[
         \phi(x)=
        \begin{cases}
            1 & |x|\in [R+1, \rho] ,\\
            0 & |x|\in [0, R] \cup [2\rho, \infty].
        \end{cases}
        \]
        with $ |D\phi(x)| \le C(n) \rho^{-1}$, here $D$ is the Euclidean connection on $\R^n$. Note that $\cH^{n-3}(Sing (\p \wtilde\O)) = 0$. Given any $\e>0$, consider $\{B_{r_j}(p_j)\}_j$ a collection of geodesic balls which cover $Sing(\p \wtilde\O)$, such that 
        $$ \sum_j r_j^{n-3} <\e. $$
        
        We define $\psi_j$ a smooth cutoff function by 
        \[
         \psi_j (x)=
        \begin{cases}
            1 &  \text{if } x\notin B_{2r_j} (p_j), \\
            0 & \text{if } x\in B_{r_j} (p_j).
        \end{cases}
        \]
        with $|D \psi_j|\le C(n) r_j^{-1}$. Now we define 
        \begin{align*}
           \psi_\e &:= \inf_j \psi_j \\
           \Phi_\e &:=  (\phi)^{\frac{n-1}{2}} \cdot \psi_\e.
        \end{align*}
         
        So we note that $\Phi_\e$ is a Lipschitz compactly supported function on $Reg(\p \wtilde \O)$. So by \eqref{equation: non compact stability inequlity}, we have
        \begin{align*}
            &\int_{Reg(\p \wtilde \O )}  \Phi_\e^{2} \, d\cH^{n-1} \le n\int_{Reg(\p \wtilde \O )}  |\nabla\Phi_\e|^{2} \, d\cH^{n-1}\\
            & =2n \int_{Reg(\p \wtilde \O )}  |D\phi^{\frac{n-1}{2}} |^2 \cdot \psi_\e^2 + \phi^{n-1} \cdot |D \psi_\e|^{2} \, d\cH^{n-1}\\
            & =  2n \int_{Reg(\p \wtilde \O )} \left(\frac{n-1}{2}\right)^2 \phi^{n-3} |D\phi |^2 \cdot \psi_\e^2 + \phi^{n-1} \cdot |D \psi_\e|^{2} \, d\cH^{n-1}\\
            & =  \int_{Reg(\p \wtilde \O )} \left( \frac{n-3}{n-1}\cdot\phi^{n-1} +C(n)  |D\phi |^{n-1} \right)   \cdot \psi_\e^2 \, d\cH^{n-1} + 2n\int_{Reg(\p \wtilde \O )}  \phi^{n-1} \cdot |D \psi_\e|^{2} \, d\cH^{n-1}.
        \end{align*}
        Therefore, abusing the notations of constants $C(n)$, we have
        \begin{align}\label{equation: perimeter cutoff}
            \int_{Reg(\p \wtilde \O )}  \Phi_\e^{2} \, d\cH^{n-1} \le C(n) \int_{Reg(\p \wtilde \O )}  |D\phi |^{n-1}   \cdot \psi_\e^2 \, d\cH^{n-1} + C(n)\int_{Reg(\p \wtilde \O )}  \phi^{n-1} \cdot |D \psi_\e|^{2} \, d\cH^{n-1}.
        \end{align}
        
        For the first part of the right hand side of \eqref{equation: perimeter cutoff}, by the definition of $\phi,\psi_\e$, we have
        \begin{align*}
            &\int_{Reg(\p \wtilde \O )}  |D\phi |^{n-1}   \cdot \psi_\e^2 \, d\cH^{n-1} \le  \rho^{1-n} \cH^{n-1} (\p \wtilde \O \cap B_{2\rho}).
        \end{align*}
            
        For the second part of \eqref{equation: perimeter cutoff}, we have

        \begin{align*} 
        \int_{Reg(\p \wtilde \O )}  \phi^{n-1} \cdot |D \psi_\e|^{2} \, d\cH^{n-1} &\le \sum_j\int_{Reg(\p \wtilde \O )\cap B_{2r_j}(p_j) }  r_j^{-2} \, d\cH^{n-1}\\
        &\le \sum_j r_j^{-2}  \cdot C r_j^{n-1}\\
        &\le C \e.
        \end{align*}
        Here the volume bound comes from the monotonicity formula \cite[17.6]{simon1983lectures}, i.e., we have $\cH^{n-1} (Reg(\p \wtilde \O ) \cap B_{2r_j} (p_j)) \le Cr_j^{n-1}$ for the constant $C$ depending on $|\wtilde{ H}|$, which is constantly 1 in our case.

        Therefore, as $\e\to 0$, the dominated convergence theorem implies that
        \begin{align}\label{equation: perimeter control}
            \cH^{n-1} (\p \wtilde \O \cap (B_\rho \setminus B_R )) \le C (1+ \rho^{1-n} \cH^{n-1} (\p \wtilde \O \cap B_{2\rho}) ).
        \end{align}
        
        Note that as $\rho \to \infty$, and $\p \wtilde \O$ is not compact, we can cover $\p \wtilde \O$ by a countable collection of geodesic balls $(B_j)_j$ such that the concentric balls in $(B_j)_j$ with the half radius are pairwise disjoint. Then because $|\wtilde H| = 1$, by the monotonicity formula (from below), we have that $$\cH^{n-1} (\p \wtilde \O \cap (B_\rho \setminus B_R )) \to \infty .$$ 
        
        On the other hand, note that $\wtilde \O$ is a locally isoperimetric region. For any $\rho$ large, consider $0<r(\rho)\le \rho $ with the property that 
        $$  |\wtilde \O \cap B_{\rho}| = | (\wtilde \O \setminus B_\rho) \cup B_{r(\rho)}|,$$
        has the same volume, so for any large $\rho$ so we have 
        
        $$  \P(\wtilde \O ; B_{\rho})\le Cr(\rho)^{n-1} +C\rho^{n-1}\le  C \rho^{n-1}.$$
        So we get a contradiction with \eqref{equation: perimeter control}.
    \end{proof}
    
    \begin{Lemma}\label{Lemma:Perimeter,components,diameter} Let $\O_R \in (M(R),g_R)$ the isoperimetric region with volume $|M(R)|/2$, and let $\p \O_R=\bigcup_{i=1}^LI_i$, where $\{I_i\}$ are the connected components of $\p \O_R$. Then there exists a nonnegative constant $R_0=R_0(V)$  (V is defined in \eqref{equation: redefine V}), such that for any $R>R_0$, the following hold:
    \begin{enumerate}
        \item $\P(\O_R) \le  \M(\Gamma) +\M(\Sigma);$
        \item each $I_i$ has uniformly bounded diameter;
        \item there exists an integer $L_0=L_0(V,R_0)>1$ such that the number of connected components satisfies $1<L<L_0$;
        \item $I_1$ and $I_2$ are homologous to $[1]:=[\Gamma] \in H_n(M(R),\Z_2)$.
    \end{enumerate}
    \end{Lemma}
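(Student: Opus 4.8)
The plan is to prove the four items in order: (1) is a competitor estimate, (2)--(3) rest on the uniform mean‑curvature bound of Lemma~\ref{Lem: curvature control for isoperimetric regions} together with the monotonicity formula, and (4) combines a homology computation for $M(R)$ with a slicing argument. The structural point used throughout is that, for $R$ large, $(M(R),g_R)$ is obtained by gluing the \emph{fixed} pieces $(V,\wtilde g)$ and the \emph{fixed} product cylinder $(\Gamma\times[0,R],g_l+dr^2)$ along fixed collars, so its local geometry (sectional curvature, injectivity radius) is bounded uniformly in $R$, and hence so are all the constants appearing in the monotonicity formula.

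\emph{Item (1).} Since $\Gamma_+\subset\overline V_+$ is glued to $\Gamma\times\{0\}$, the set $\O:=V_+\cup(\Gamma\times[0,s])$ is open in $M(R)$ with $\p\O=\Sigma\cup(\Gamma\times\{s\})$; when $s$ lies in the cylindrical part of the tube, $\P_{g_R}(\O)=\M(\Sigma)+\M(\Gamma)$ and $|\O|_{g_R}=|V_+|_{\wtilde g}+s\,\M(\Gamma)$. For $R$ larger than a constant depending only on $V$ one solves $|\O|_{g_R}=|M(R)|_{g_R}/2$ with $s$ well inside the cylindrical region (indeed $s\approx R/2$), so comparing $\O_R$ with $\O$ gives $\P(\O_R)\le\M(\Gamma)+\M(\Sigma)$.

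\emph{Items (2) and (3).} By Lemma~\ref{Lem: curvature control for isoperimetric regions} there is $C<\infty$, independent of $R$, with $|H|\le C$ on $Reg(\p\O_R)$. Fix $r_0>0$ below $1/C$, below half the uniform injectivity radius, and below the monotonicity radius; then $\|\mu_{\O_R}\|(B_{r_0}(p))\ge c_0>0$ for every $p\in\p\O_R$, with $c_0$ independent of $R$, since $|\p^*\O_R|$ has density $\ge1$ at every point of $\p\O_R=\spt\mu_{\O_R}$. If a component $I_i$ has $\diam_M(I_i)\ge D$, then, $I_i$ being connected, the function $x\mapsto d_M(p,x)$ attains every value in $[0,D]$ on $I_i$ for a suitable $p\in I_i$, so one picks $N\ge cD/r_0$ points $p_1,\dots,p_N\in I_i$ that are pairwise $2r_0$‑separated; the balls $B_{r_0}(p_k)$ are then disjoint, whence $\P(\O_R)\ge\sum_k\|\mu_{\O_R}\|(B_{r_0}(p_k))\ge Nc_0\ge c'D$, and with item~(1) this forces $\diam(I_i)\le D_0:=(\M(\Gamma)+\M(\Sigma))/c'$. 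Finiteness of $L$ and the bound $L\le L_0$ then follow from item~(1) and the standard fact that, in a manifold of bounded geometry, every connected component of the boundary of an isoperimetric region has area bounded below by a constant depending only on the geometry — hence uniformly in $R$ (cf.\ \cite{chodosh2022riemannian,ros2010stable}). Finally $L>1$: if $\p\O_R=I_1$ were connected, then $I_1\subset B_{D_0}(p)$ for some $p$, and for $R$ large $M(R)\setminus B_{D_0}(p)$ is connected and disjoint from $\p\O_R$, so it is contained in $\O_R$ or in its complement; this forces $|\O_R|\le|B_{D_0}|$ or $|\O_R^c|\le|B_{D_0}|$, contradicting $|\O_R|=|\O_R^c|=|M(R)|/2\to\infty$.

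\emph{Item (4).} A Mayer--Vietoris computation for $M(R)=V\cup_{\Gamma_\pm}(\Gamma\times[0,R])$ — using $H_{n-1}(\Gamma;\Z_2)=0$ (one checks from Theorem~\ref{Theorem: V} that $\Gamma$ is diffeomorphic to $\S^3\times\S^4$, $\Sigma$ being the suspension of $\S^3\times\S^3$), the fact that $\Gamma_\pm$ is homologous to $\Sigma$ in $V$ and to a slice $\Gamma\times\{t\}$ generating $H_n(\Gamma\times[0,R];\Z_2)$ — shows $H_n(M(R);\Z_2)\cong\Z_2$, generated by $[\Gamma]=[\Sigma]=[\Gamma\times\{t\}]$. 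Writing $\pi:\Gamma\times[0,R]\to[0,R]$ for the projection, $\bigcup_i\pi(I_i)$ has measure $\le L_0D_0$ by items~(2)--(3), and for $t\notin\bigcup_i\pi(I_i)$ the slice $\Gamma\times\{t\}$ misses $\p\O_R$, hence lies entirely in $\O_R$ or in its complement. Using $|\O_R|=|\O_R^c|=|M(R)|/2\approx\M(\Gamma)R/2$ one sees that for $R$ large there are ``good'' levels $D_0<t_1<t_2<R-D_0$, avoiding $\bigcup_i\pi(I_i)$, with $\Gamma\times\{t_1\}\subset\O_R$ and $\Gamma\times\{t_2\}\cap\O_R$ null (otherwise $|\O_R\cap(\Gamma\times[0,R])|$, resp.\ $|\O_R^c\cap(\Gamma\times[0,R])|$, would be $\gtrsim\M(\Gamma)R$, too large). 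Since $t_1,t_2\notin\bigcup_i\pi(I_i)$, each $I_i$ lies entirely inside or entirely outside $\Gamma\times(t_1,t_2)$; with $S:=\{i:I_i\subset\Gamma\times(t_1,t_2)\}$, the $\Z_2$‑cycle $\p\O_R\cap(\Gamma\times(t_1,t_2))=\bigsqcup_{i\in S}I_i$ has mod‑$2$ intersection number $1$ with a generic vertical arc, so $\sum_{i\in S}[I_i]=[\Gamma]$; and since $[\p\O_R]=0$, also $\sum_{i\notin S}[I_i]=[\Gamma]$. As $H_n(M(R);\Z_2)=\Z_2$, each of the two disjoint index sets $S$, $S^c$ contains a component of class $[\Gamma]$; relabel these two components $I_1,I_2$.

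\emph{Expected main obstacle.} The technical heart is item~(2): promoting ``large diameter'' to ``large perimeter'' with all constants uniform in $R$. The disjoint‑balls argument itself is short once the uniform mean‑curvature bound and the uniform local geometry of $(M(R),g_R)$ are in hand; the care goes into applying the monotonicity formula in the presence of $Sing(\p\O_R)$ and into verifying that every constant depends only on $V$, and not on $R$.
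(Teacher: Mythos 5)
Your proposal is correct, and for items (1)--(3) it follows essentially the paper's route: the same competitor $V_+\cup(\Gamma\times[0,s])$ for (1), and the uniform mean curvature bound of Lemma~\ref{Lem: curvature control for isoperimetric regions} plus the monotonicity formula to convert diameter into perimeter, to bound the number of components from above, and to rule out $L=1$ by a volume count (your phrasing that $M(R)\setminus B_{D_0}(p)$ is connected is slightly stronger than needed and would require a word of justification; the paper instead observes that a long tube segment misses $\p\O_R$, hence lies entirely in $\O_R$ or its complement, which gives the same contradiction without that check). Where you genuinely diverge is item (4). The paper argues by contradiction: if fewer than two components are homologically nontrivial, then since $[\p\O_R]=0$ all $I_i$ bound, so $\O_R$ is trapped in $V$ together with boundedly many short tubes, contradicting half volume; this is supported there by the assertion that $M(R)$ is homeomorphic to $\Gamma\times\S^1$. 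You instead compute $H_n(M(R);\Z_2)\cong\Z_2$ by Mayer--Vietoris and run a slicing/parity argument at two good levels $t_1,t_2$, producing one nontrivial component inside $\Gamma\times(t_1,t_2)$ and one outside. This is more detailed and arguably more robust: it sidesteps the product-structure assertion (note $\overline V$ retracts to $\Sigma$, not to $\Gamma$, so the homeomorphism with $\Gamma\times\S^1$ is not immediate), and it anticipates Steps 1--2 of the Claim in the proof of Theorem~\ref{Theorem: exists singular region}. Two points to tighten: the identification $\Gamma\cong\S^3\times\S^4$ (used only to get $H_{n-1}(\Gamma;\Z_2)=0$) is true but needs justification via the topology of the Hardt--Simon smoothings of the Simons' cone, or can be avoided by noting that every bounded-diameter component lies either in the open cylinder or in a fixed neighborhood of $V$, where the only possible nonzero class is the slice class $[\Gamma]=[\Sigma]$; and your per-component area lower bound in (3) requires first restricting the perimeter varifold to a single component before applying monotonicity (legitimate, since the components are disjoint compact sets, so the restriction keeps bounded first variation), a point the paper also leaves implicit.
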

    \begin{proof}
        (1) Note that 
        \begin{align}\label{eqn: perimeter upper bound}
            \P(\O_R)\le \M(\Gamma)+ \M(\Sigma),
        \end{align}
        because there is clearly a $U_R\in C(M(R),g_R ,|M(R)|_{g_R}/2)$ such that $U_R =  V_+ \cup (\Gamma \times [0, t_0])$ for some $t_0$, which has half volume and with the boundary 
        $$ \big[ \{t_0\}\times \Gamma \big] \cup \Sigma. $$
        
        (2) Next suppose that there is a sequence $R_k \to \infty $, denote with $\O_k \subset (M(R_k),g_k)$ the corresponding isoperimetric regions with $|\O_k|_{g_k} =\frac12 |M(R_k)|_{g_k}$. Here we denote $g_k:=g_{R_k}$. Suppose $I_k\in \p \O_{k}$ is a component such that $diam(I_k) \to \infty.$ We observe that all the metrics $g_R$ are locally isometric, so by Lemma~\ref{Lem: curvature control for isoperimetric regions}, we have a uniform bound of mean curvature for $Reg(\p \O_k)$ and all $k\in \N $. Then monotonicity formula \cite{allard1972first, cheung1991non} shows that for any $x\in \p \O_k$, we have
        \begin{align}\label{align: monotonicity}
            f(r) := e^{7 (\norm{H_k}_{\infty} +C) r} \frac{\P( \O_k ;B(x,r))}{ \o_7 r^7},
        \end{align}
        is non-decreasing. Where $C$ depends on the upper bound of sectional curvatures of $M(R_k)$ (so independent with $R_k$), $H_k$ is the mean curvature of $Reg(\p \O_k)$, and $\o_7$ the volume of a unit ball of dimension $7$.
        So the $\diam(I_k) \to \infty$ implies that $\M(I_k) \to \infty $, contradictions with (1).

        (3) Given any sequence $R_k\to \infty$, first, suppose that the isoperimetric regions are connected. Note that by (1) and (2):
        \begin{align}
            \P(\O_k)\le \M(\Gamma)+ \P(\Sigma),
        \end{align}
        so they have a uniform bound of diameters. So for each $k$ large, under a translation of the coordinate along the cylindrical direction, $\p \O_k\subset M(R_k)\setminus [0,\frac34 {R_k}] \times \Gamma $,
        therefore, $|\O_k|$ cannot be equal to $\frac12 |M(R_k)|$. This leads to a contradiction. 

        In addition, by the uniform bound of the mean curvatures for $\O_k$ with all large $R$, the monotonicity formula \eqref{align: monotonicity} implies there exists a $c=c( V)$ such that the perimeter of each component of $\p\O_R $ is bounded below by $c$. By \eqref{eqn: perimeter upper bound}, we get a uniform bound $L$ about the number of components of $\p\O$ for all large $R$.

        (4) Denote $\p\O_R= \bigcup_i I_i$. Suppose (4) fails. Note that by the construction, $M(R)$ is homeomorphic to $\Gamma\times \S^1$, then by the boundedness of diameter, all $\{I_i\}$ are boundaries. Reasoning as in the first part of (3), there exists $(t_i)_i$ and $(T_i)_i$ such that
        \[
        I_i \setminus V\subset Tube(T):=[t_i,t_i+T_i]\times\Gamma,
        \qquad \text{and}\qquad
        T_i<T_0<\infty\,,
        \]
        for every $i=1,\dots, L$, where $T_0$ is independent of $i$ and exists by (2). In particular, since each $I_i$ is a boundary, we have
        \[
         \O_R\subset V\cup \bigcup_{i=1}^L Tube(T_i)\subset V\cup\bigcup_{i=1}^L Tube(T_0).
        \]
        Therefore we conclude
        $$ \Vol (\O_R)\leq \Vol\left(V\cup \bigcup_{i=1}^L Tube(T_i)\right)\le \Vol(V)+ \sum_{i=1}^L T_0\cdot \M(\Gamma). $$
        which for sufficiently large $R$ cannot be half the volume of the whole manifold.  
    \end{proof}
    
    \begin{proof}[Proof of Theorem~\ref{Theorem: exists singular region}]

        We still denote $\O_R$ the isoperimetric region in $M(R)$ with half volume. In order to prove the isoperimetric region is of the form we want, we will consider the connected components of $\p \O_R$ into two types:
        \begin{align*}
            \{ \Lambda_i \} &:=\ \{\Lambda_i \subset\p\O_R : \Lambda_i \cap \overline V = \emptyset \} ;\\
            \{\Delta_i \} &:= \ \{ \Delta_i \subset\p\O_R : \Delta_i \cap \overline V \neq \emptyset \}.
        \end{align*}

        Combining the results in Lemma~\ref{Lemma:Perimeter,components,diameter}, we claim the following result for isoperimetric regions $\p\O_R$ with large $R$. 

        \noindent
        {\bf Claim:} With $R_0$ from Lemma~\ref{Lemma:Perimeter,components,diameter}, there exists $R_1>R_0$, such that for any $R>R_1$, $\p \O_R$ has exactly 2 components $\Lambda,\Delta$, where $ \Lambda \in \{ \Lambda_i \}$ and $\Delta\in\{\Delta_i \}$. Specifically, $[\Lambda]=[\Delta]=[1]$ where $[1]:=[\Sigma] \in H_n(M(R),\Z_2)$.
        
        \noindent
        \textit{Proof of the claim:}
        
        \noindent
        \textbf{Step 1.} $\{\Delta_i\} \neq \emptyset$ and at least one $\Delta_i$ is homologous to $\Sigma$.
        
        Suppose not, then $I_1,I_2\subset [0,T] \times \Gamma$, where $I_1, I_2$ are as in Lemma~\ref{Lemma:Perimeter,components,diameter}(4). Since $[I_1]= [I_2] =[1] \in H_n(M(R),\Z_2)$ and $I_1,I_2\in \{\Lambda_i\}$, this implies that
        \[
        \P(\Omega_R)\geq \M(I_1)+\M(I_2) \geq 2\M(\Gamma) > \M(\Sigma)+\M(\Gamma)\stackrel{(1)}{\geq} \P(\O_R)\,.
        \]
        A contradiction.





        \noindent
        \textbf{Step 2.} $\{\Lambda_i\} \neq \emptyset$ and at least one $\Lambda_i$ is homologous to $\Sigma$.

        Suppose not, then we have that $\partial \Omega_R=\bigcup_{i=1}^{L_1}\Delta_i \cup \bigcup_{i=1}^{L_2}\Lambda_i$, with $L_2=0$ if $\{\Lambda\} = \emptyset $; and in the other case, each $\Lambda_i=\p U_i$, for some open connected $U_i\subset \O_R$. Moreover, by Lemma~\ref{Lemma:Perimeter,components,diameter}(3), $\diam(\Delta_i)<d_0$ and $L_1+L_2<L_0$, with $L_0, d_0$ independent of $R$. Now notice that since $\Delta_i\cap V\neq \emptyset$, there exist $T_0>0$, depending only on $d_0$, such that
        \[
        \Omega_R\subset V\cup [(0, T_0)\cup (R-T_0)] \times\Gamma\cup\bigcup_{i=1}^{L_2}U_i\,,
        \]
        with $\Vol(U_i)\leq T_0\times \Gamma$.This yields
        \[
        \Vol(\Omega_R)\leq (L_2+2) \cdot T_0\cdot \M(\Gamma)\,,
        \]
        which leads a contradiction for $R$ sufficiently large.

        \noindent
        \textbf{Step 3.} $\p\O_R$ has no other components.
        
        We have concluded that there is at least one $\Delta\in \{ \Delta_i \}$ and at least one $\Lambda\in \{ \Lambda_i \}$ such that
        \begin{align*}
            [\Delta]&=[1]\in H_7(M(R), \Z_2) ,\\
            [\Lambda]&=[1]\in H_7(M(R), \Z_2).           
        \end{align*}

        Clearly $\Lambda \subset [0,R]\times \Gamma$ where each slice $\{t\}\times \Gamma$ is a homological area-minimizing in the tube $Tube(R)$. So we directly have
        \begin{align*}
            \M(\Lambda)&\ge \M(\Gamma).
        \end{align*}

        On the other hand, we know that $\Sigma$ is the unique homological area minimizer in $M(R)$. So clearly 
        $$ \M(\Delta) \ge \M (\Sigma). $$

        So overall, by the perimeter upper bound \eqref{eqn: perimeter upper bound}, the only case that would happen is 
        $$\p \O_R = \{t_0\} \times \Gamma \cup \Sigma. $$
    \end{proof}

    We have two isolated singulars on $\Sigma$, so the above lemma proves Theorem~\ref{Theorem: main theorem}.

\section{Proof of Theorem \ref{Theorem: main theorem 2}} \label{Section: Proof of theorem 2}
    The proof of Theorem \ref{Theorem: main theorem 2} strongly relies on the construction of singular homological area minimizers in higher dimensions, i.e.,  the following lemma from \cite{smale2000construction}:
    \begin{Lemma}[\textbf{cf.} {\cite[Lemma 1]{smale2000construction}}]\label{Lemma: local minimizing for high dim} Suppose $(M,g_0)$ a smooth,closed Riemannian manifold of dimension $n+1$, with $n\geq 7$, $\Sigma\subset M$ an orientable hypersurface with $Sing(\Sigma)$ of Hausdorff dimension less or equal to $n-7$. 
    In addition, there exists $\sigma>0$ such that $\cN(\sigma)$, the tubular neighborhood of $Sing(\Sigma)$ is the finite disjoint union $\cN (\sigma)= \bigcup^k_{i=1} \cN_i(\sigma)$, and assume that there are isometrics:
    $$\Phi: \cN(\sigma) \to ( \B^{n_i+1}(\sigma)\times\Lambda_i,g_{eucl} +h_i). $$ 
    where $(\Lambda_i, h_i)$ is a compact Riemannian manifold of dimension $k_i$, and $n_i + k_i = n$, $n_i\geq 7, k_i \geq 0$.
    Furthermore, assume that
    $$  \Phi_i(\Sigma\cap \cN(\sigma)) = {\bf C_i}(\sigma)\times\Lambda_i, $$
    where $\bf C_i$ is any strictly stable, strictly minimizing, regular hypercone in $\R^{n_i+1}$. Then, there exists a metric $g$ on $M$, with $g \equiv g_0$ on $\cN (\sigma_1)$ for some $\sigma_1 <\sigma$, and $\d>0$, such that $\Sigma$ is the unique, homologically area minimizing current in $U(\d)$ relative to the metric g.
    \end{Lemma}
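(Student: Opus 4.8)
The plan is to run the argument of Theorem~\ref{Lemma: Local homological minimizing} (Smale's $8$-dimensional case) in this more general setting, following \cite[Lemma~1]{smale2000construction}. Concretely, I would produce a metric $g$ on $M$ with $g\equiv g_0$ on a smaller tube $\cN(\sigma_1)$ around $Sing(\Sigma)$, a number $\delta>0$, and a smooth foliation $\{\Sigma_t\}_{|t|\le\delta}$ of a neighbourhood $U(\delta)$ of $\Sigma$ such that $\Sigma_0=\Sigma$ is minimal while, for $t\ne 0$, each leaf $\Sigma_t$ has nowhere-vanishing mean curvature whose sign is constant on each side of $\Sigma$ and is oriented so that $\Sigma_t$ is a one-sided barrier. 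Given such a foliation, homological minimality and uniqueness in $U(\delta)$ follow by a maximum-principle argument: if $T$ is a mod-$2$ integral current with $\spt T\subset U(\delta)$, $[T]=[\Sigma]$, and $\M(T)\le\M(\Sigma)$, then on each side of $\Sigma$ the outermost leaf $\Sigma_{t}$ reached by $\spt T$ cannot in fact be reached — the barrier inequality from the maximum principle for stationary varifolds is violated — unless $\spt T$ contains an open piece of $\Sigma_{t}$; unique continuation together with the homology/mass constraint then forbids $t\ne 0$, so $\spt T\subset\Sigma$, and the constancy theorem yields $T=\Sigma$. (Equivalently, integrating the divergence of the unit normal field of the foliation over the region caught between $T$ and $\Sigma$ gives $\M(\Sigma)\le\M(T)$, with equality only for $T=\Sigma$.)

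The foliation is assembled from two pieces. Near $Sing(\Sigma)$: by hypothesis $\Phi_i$ identifies $\cN_i(\sigma)$ with $(\B^{n_i+1}(\sigma)\times\Lambda_i,\,g_{eucl}+h_i)$ and $\Sigma\cap\cN_i(\sigma)$ with ${\bf C}_i(\sigma)\times\Lambda_i$; since ${\bf C}_i$ is a regular, strictly stable, strictly minimizing hypercone in $\R^{n_i+1}$, the Hardt--Simon theory (\cite[Sections~3--5, Theorem~4.4]{hardt1985area}) provides a foliation of $\B^{n_i+1}(\sigma)$ by ${\bf C}_i$ and by smooth, area-minimizing hypersurfaces $\{S^i_t\}$, one family on each side of the cone, with $S^i_t\to{\bf C}_i$ as $t\to 0$. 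Taking products with $\Lambda_i$ foliates $\cN_i(\sigma)$ by $S^i_t\times\Lambda_i$, each of which is area-minimizing for $g_{eucl}+h_i$ (the product of an area-minimizing current with a closed manifold is area-minimizing); here one keeps $g=g_0$, so that $\Sigma$ stays minimal and the product structure is genuine. Away from $Sing(\Sigma)$: $\Sigma$ is a smooth minimal hypersurface, and, as in \cite{smale1999singular}, one performs a conformal change of $g_0$ supported in $M\setminus\cN(\sigma_1)$ and chosen to preserve minimality of $\Sigma$, so that $\Sigma$ becomes strictly stable in the weighted sense of Definition~\ref{Definition: Strictly stable} with weight $\rho=\mathrm{dist}(\cdot,Sing(\Sigma))$. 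This is possible because the strict stability of the cones ${\bf C}_i$ is inherited by the products ${\bf C}_i\times\Lambda_i$ and hence supplies the needed spectral gap of the Jacobi form on the part of $\Sigma$ inside $\cN_i(\sigma_1)$, where the metric is unchanged, while a small conformal bump raises the bottom of the spectrum on the compact complementary part. Strict stability then produces a positive Jacobi subsolution $\phi$ on $\Sigma\setminus Sing(\Sigma)$, and the normal graphs $\Sigma_t=\{\exp_x(t\phi(x)\nu(x)):x\in\Sigma\setminus Sing(\Sigma)\}$ foliate a punctured neighbourhood of $\Sigma$, with mean curvature $H(\Sigma_t)=-t\,L_\Sigma\phi+O(t^2)$, $L_\Sigma=\Delta_\Sigma+|A_\Sigma|^2+Ric_g(\nu,\nu)$, of a definite sign for $0<|t|\ll 1$.

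The two foliations must be matched on the collar $\cN_i(\sigma)\setminus\cN_i(\sigma_1)$, interpolating between the product leaf $S^i_t\times\Lambda_i$ and the Jacobi graph $\Sigma_t$ via a cutoff in $\rho$; this gluing is the main technical obstacle, exactly as in \cite[Lemma~4]{smale1999singular}. The cutoff produces error terms in the mean curvature of the glued leaves, and the point is that they do not overwhelm the sign: this is precisely where \emph{strict} stability — the $C\int_\Sigma u^2\rho^{-2}$ term, rather than plain stability — is indispensable, since it provides exactly the $\rho^{-2}$ room needed to absorb the $O(\rho^{-2})$-sized gluing errors; one also checks that the sign of the mean curvature of $S^i_t\times\Lambda_i$ agrees with that of the $\Sigma_t$, so the barrier orientation is consistent across the glued region. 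After this, $\{\Sigma_t\}_{|t|\le\delta}$ is a smooth foliation of some $U(\delta)$ with $\Sigma$ its unique minimal leaf, and the maximum-principle argument of the first paragraph applies. The codimension hypothesis $\dim Sing(\Sigma)\le n-7$ enters only to guarantee $\cH^{n-3}(Sing(\Sigma))=0$, so that cutoffs around $Sing(\Sigma)$ are harmless and a competitor $T$ cannot concentrate mass on the singular set.
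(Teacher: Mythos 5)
The paper does not actually prove this lemma: it is quoted wholesale from Smale's construction paper \cite[Lemma 1]{smale2000construction}, so there is no internal argument to compare yours against. What you propose is a genuine proof attempt, and it follows the foliation-plus-calibration route --- Hardt--Simon leaves near the cone, a Jacobi-type graphical family away from $Sing(\Sigma)$, gluing, then a maximum-principle/divergence argument --- which is closer in spirit to the approach of Wang \cite[Theorem 5.1]{wang2020deformations} (explicitly mentioned in the paper's remarks as an alternative) than to Smale's original argument, which establishes local homological minimality from strict stability and the strict minimizing property of the cones without assembling a global barrier foliation. Reproving the lemma this way is legitimate in principle, but as written your sketch has real gaps rather than being a complete alternative proof.

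Concretely: (i) the central technical step --- interpolating between the product leaves $S^i_t\times\Lambda_i$ and the Jacobi graphs on the collar $\cN_i(\sigma)\setminus\cN_i(\sigma_1)$ while keeping a consistent one-sided barrier structure, and showing the strict stability term $C\int u^2\rho^{-2}$ really dominates the gluing errors --- is only named as ``the main technical obstacle''; this is precisely the content of the lemma, so deferring it leaves the proof essentially unproved. (ii) Your setup is internally inconsistent: you require every leaf $\Sigma_t$, $t\neq 0$, to have nowhere-vanishing mean curvature, yet near $Sing(\Sigma)$ the leaves are the Hardt--Simon products, which are area minimizing and hence minimal; with minimal leaves in part of the foliation the calibration argument still yields $\M(\Sigma)\le\M(T)$, but the uniqueness statement no longer follows from a strict sign and must instead invoke the Hardt--Simon uniqueness/strict minimizing property of the cones (``unique continuation plus the constancy theorem'' as stated does not rule out a competitor agreeing with a minimal leaf inside $\cN_i(\sigma_1)$). (iii) The assertion that a conformal bump supported away from $\cN(\sigma_1)$ preserves minimality of $\Sigma$ and produces the weighted strict stability of Definition 2.7 is exactly the delicate point of Smale's Lemmas in \cite{smale1999singular,smale2000construction} and needs the specific choice of conformal factor (identically $1$ to second order along $\Sigma$), not just the remark that it is ``possible''. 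Finally, the hypothesis $\dim Sing(\Sigma)\le n-7$ is not used ``only'' for cutoffs: it is what allows the cross-sectional cones ${\bf C}_i\subset\R^{n_i+1}$, $n_i\ge 7$, to exist and the product structure $\B^{n_i+1}(\sigma)\times\Lambda_i$ to be imposed, which is where the hypothesis genuinely enters.
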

    
    \begin{proof}[{Proof of Theorem \ref{Theorem: main theorem 2}}]
    At first, we construct a singular homological area minimizer $\Sigma$ as described in \cite{smale2000construction}. Fix $n\geq 7$ and $p\geq 3$. We arbitrarily choose $M$ a smooth closed $(n+1)-$manifold with $S$ a smooth connected, oriented, embedded hypersurface, representing a nontrivial element of $H_n(M,\Z)$. Denote ${\bf C} := {\bf C}^{p,p}$ the Simons' cone in $\R^{2p+2}$. We first construct a Riemannian metric $g$ on $M$ and a singular hypersurface $\Sigma$ in $(M,g)$ such that $\Sigma$ is homologous to $S$.
    Denote $\cB$ an open set of $M$ such that $p\in \cB$ for some $p\in S$, and $S$ divides $\cB$ into two parts $\cB_\pm$. 

    Next, we will put a ``cap'' on the Simons' cone to make it compact. As constructed in \cite[Proposition]{smale2000construction}, note that $\p{\bf C} (1):= \p {\bf C} \cap \B^{2p+2}(1)$ is a compact embedded $2p$-manifold, and so it bounds a smooth compact $(2p+1)$-manifold $Y$. So $C(1) \cup Y$ is piecewisely smooth (away from $\{0\}$). We can approximate ${\bf C}(1) \cup Y $ to a compact hypersurface without boundary, denoted by $\hat{\bf C}\subset \R^{2p+2}$, such that $\hat{\bf C}$ is smoothly embedded in $\R^{2p+2}$ except at the origin, and a $\sigma > 0$, such that $\hat{\bf C}\cap \B^{2p+2}(\sigma) = {\bf C} \cap \B^{2p+2}(\sigma)$. Furthermore, $\hat{\bf C}$ is contained in the unit ball (by scaling if needed).

    On the other hand, $\S^{n-2p-1}$ is embedded into $\R^{n+1}$ with a trivial $(2p+2)$-normal bundle. So we have an embedding map\footnote{As remarked in \cite{smale2000construction}, we can replace sphere to any smooth, connected, compact, orientable $\Lambda$ such that there is an embedding $\overline{\B}^{2p+2}\times \Lambda \to \R^{n+1}$ with $\dim \Lambda +2p+2 =n+1$.}
    \begin{align}\label{equation: possible Lambda}
       \overline{\B}^{2p+2}\times \S^{n-2p-1}\to \R^{n+1}.
    \end{align}
    
    Theorefore, there is an embedding $\Psi: \B^{2p+2}\times \S^{n-2p-1} \to \cB_+$. Denote $\hat{\Sigma} := \Psi(\hat{\bf C}\times \S^{n-2p-1})$. Let $D$ be a $n$-disc in $\hat{\Sigma}$ which is in the image of the annulus $\A^{2p+2}(0,\frac12,1)\times \S^{n-2p-1}$, and let $D'$ be a n-disc in $S \cap \cB$. 
    Delete $D$ and $D'$ and smoothly connect $S$ and $\hat{\Sigma}$ by a handle (i.e., a hypersurface in N diffeomorphic to an $n-1$ sphere times an interval) in $\cB_+$, 
    and gluing the boundaries of the handle with $D \cup D'$. Denote $\Sigma$ be the resulting hypersurface in $M$. Note that $\Sigma $ is homologous to $S$. Finally, we will define a metric $g_0$ in $M$. For points in $\Psi(\B^{2p+2}\times \S^{n-2p-1})$, we require $g_0$ the the product metric by the pullback metric with $\Psi^{-1}$. Therefore, we can assign a metric $g_0$ on $M $ such that $g_0 = (\Psi^{-1})^*(g_{eucl} + g_S)$ on $\Psi(\B^{2p+2}(\frac12)\times \S^{n-2p-1})$. Therefore, we get a $(M,g_0)$ and $\Sigma$ that satisfy the hypotheses of Lemma \ref{Lemma: local minimizing for high dim}. So there exists a Riemannian metric $g$ on $M $ and a $\d>0$ such that $\Sigma$ is homological area minimizing in $(U_\d,g)$.

    Then we can do the same argument as in Theorem \ref{Theorem: V} to get $V$ a smooth neighborhood of $\Sigma$. Moreover, we need $V\subset U_\d$, and $\p V:=\Gamma_+ \cup \Gamma_- $ for two smooth hypersurfaces $\Gamma_+,\Gamma_-$ which are homologous to $\Sigma$. Since $\Sigma$ is orientable, for $\d$ small, $\Sigma$ splits $U_\d$ into two parts, denoted by $U_+$ and $U_-$. Consider $\cN(\sigma/8)$ the tubular neighborhood of $Sing(\Sigma)$. Now consider $(x,t) $ the Fermi coordinate on $M$ for $x\in\Sigma\setminus \cN(\sigma/8)$ and $|t|<\d$. 
    We denote $ \rho(x) :=d_{M}(x,Sing(\Sigma)) $, $\Sigma_\sigma = \{ q \in \Sigma: \rho(q) < \sigma \}$, and the constant graph on $\Sigma \setminus \Sigma_{\sigma/8}$:
    $$\Gamma_t = graph_{\Sigma \setminus \Sigma_{\sigma/8}} t,$$
    with $|t|< \d$.
    
    We need to construct smooth barriers near $Sing(\Sigma)$. Denote 
    $$\Lambda_t := \{ (x,t) : \rho(x,0) = \sigma/4 \}.$$
    $\Lambda_t$ bounds an area minimizing $n$-current $R_t$ lies in $\cN(\sigma/4)$. Consider the isometry
    $$ \Phi:(\cN(\sigma/4), g)\to (\B^{2p+2}(\sigma/4)\times \S^{n-2p-1}, g_{eucl} +g_S) .$$

    So $\Phi(R_t)$ is area minimizing in $\B^{2p+2}(\sigma/2)\times \S^{n-2p-1}$. Furthermore, by \cite[Theorem 2.1]{hardt1985area}, we have
    $$\p \Phi(R_t) = graph_{\p C(\sigma/4)} t \times \S^{n-2p-1}, \ \ \ \ \Phi(R_t) = S_t \times \S^{n-2p-1},$$  
    where we denote $S_t \subset \B^{2p+2}(\sigma/2)$ the area minimizing $n$-current with the boundary $graph_{\p C(\sigma/4)}t$.
    
    By \cite[Theorem 2.1, 5.6]{hardt1985area}, as $t\to 0$, $R_t\to \Sigma_{\sigma/4}$ in the current and Hausdorff sense, and there exists $\e_t>0$, a $C^2$ function $u_t$ on $\Sigma_{\sigma/4} \setminus \Sigma_{\e_t}$ such that $R_t \setminus \cN(\e_{t})$ can be described as a graph of $u_t$.

    On the other hand, we can find a smooth cutoff function $\chi$ supported on the annulus $\Sigma_{\sigma/4} \setminus \Sigma_{\e_t}$ such that 
    \[
    \chi(x) = 
    \begin{cases}
        1 & \text{for } \rho(x) \ge \sigma/5 , \\
        0 & \text{for } \rho(x) \le \sigma/7 .
    \end{cases}
    \]
    
    We can get a smooth hypersurface by gluing the smooth submanifold $\Gamma_t$ with $R_t$ through a function $w$ on $\Sigma \setminus\Sigma_{\sigma/8}$ by
    $$ w = \chi u_t + (1-\chi) t. $$

    Therefore, similar to Theorem \ref{Theorem: V}, for each side of $\Sigma$, we can employ any small positive (resp. negative) $t$ to find a smooth hypersurface, denoted by $\Gamma_+$ (resp. $\Gamma_-$) in $ U_+$ (resp. $U_-$), which is homologous to $\Sigma$. 
    Denote $V$ the neighborhood of $\Sigma$ bounded by $\Gamma_+, \Gamma_-$, $V$ clearly has a smooth boundary. Moreover, by the symmetry of ${\bf{C}}^{p,p}$, $\Gamma_+,\Gamma_-$ are diffeomorphic to each other. Denote $\Gamma$ the $(2p+1)$-manifold which is diffeomorphic to $\Gamma_+$ and $\Gamma_-$ by $F_\pm :\Gamma_\pm\to\Gamma $. Consider the smooth manifold defined by gluing the boundaries of $V$ and $\Gamma\times [0,R]$:
    $$M(R):=\Gamma \times [0,R]\cup V/\Gamma_{\pm}\times \{0,R\} \sim_{F_\pm} \tilde \Gamma_{\pm}.$$   
    Then using the same argument as Theorem \ref{Theorem: Get M(R)}, there exists $R_1>0$ such that for any $R>R_1$, we get Riemannian metrics $g_R$ on $M(R)$ such that $\Sigma$ is the unique homological area minimizer in $(M(R),g_R)$. Then Theorem \ref{Theorem: exists singular region}, Lemma \ref{Lem: curvature control for isoperimetric regions}, and Lemma \ref{Lemma:Perimeter,components,diameter} implies that for sufficiently large $R>0$, the boundary of the two unique isoperimetric regions (the one and its complement) with volume $|M(R)|_{g_R}/2$ is of the form
        $$\Sigma \cup [\Gamma\times \{t_0\}]. $$
        
    So we get a singular isoperimetric region in any higher dimension.
    \end{proof}

\section{Open problems}\label{Section: Open problems}

    In this section, we will discuss two open questions about isoperimetric regions. The first is about more general types of examples. The second is about the generic regularity of isoperimetric regions.

    \bigskip

    \noindent
    \textbf{1. About the existence of a singular example with non-zero mean curvature}

    There are two natural questions we may proceed with:
    
    \begin{enumerate}[wide, labelindent=0pt]
    \item\textbf{Prescribed regular tangent cones:}
        Our construction in Theorem \ref{Theorem: exists singular region} and Theorem \ref{Theorem: main theorem 2} require the tangent cones to be Simons' cones. Then we can guarantee that $\Gamma_+$ and $\Gamma_-$ are diffeomorphic to each other. If we generally choose the tangent cones as regular, strictly stable, strictly minimizing hypercones, $\Gamma_+$ may not be diffeomorphic to $\Gamma_-$. 

    \item\textbf{Prescribed mean curvature:} 
        Note that $\p \O$ is particularly composed of area-minimizing hypersurfaces. So this singular example comes from the existence of singular area minimizers. So now we come up with a new question: whether there exists a singular isoperimetric region with mean curvature constantly non-zero? In Morgan and Johnson \cite[Theorem 2.2]{morgan2000some}, we see that regardless of dimensions, if an isoperimetric region encloses a sufficiently small region, it will not have singularities (it will be a nearly round sphere.) However, for an isoperimetric with mean curvature small, a singular example is still unknown.
    \end{enumerate}

    \bigskip

    \noindent
    \textbf{2. About generic regularity of isoperimetric regions} 

    For the homological area-minimizing case, Smale in \cite{smale1993generic} shows that there exists a $C^\infty$-generic Riemannian metric such that for every nontrivial element in $H_7(M,\Z)$, there exists a (unique) smooth homological area-minimizing hypersurface.

    So we may ask whether we could generically ``smooth" the singular isoperimetric regions. Let $(M,g)$ be an $8$-dimensional closed Riemannian manifold. For $k = 3,4,...$, denote $\cM^k$ the class of $C^k$ metrics on $M$,
    endowed with the $C^k$ topology. For any $t\in (0,|M|_g) $, we define the subclass $\cF^k_t \subset \cM^k$ to be the set of
    metrics such that there is a smooth isoperimetric region with volume $t$
    (relative to the new $\overline g\in \cF^k_t$). 
    \begin{Conjecture}
        $\cF^k_t$ is dense in $\cM^k$ for any $k\ge 3$ and $ t\in (0 , |M|_g))$.
    \end{Conjecture}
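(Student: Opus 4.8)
The plan is to adapt Smale's ``generic metrics'' technique for homological area minimizers \cite{smale1993generic} to the volume-constrained problem, using crucially that isoperimetric regions are \emph{locally} area minimizing (the footnote to Lemma~\ref{Lem: curvature control for isoperimetric regions}). Fix $g_0\in\cM^k$ and $t\in(0,|M|_{g_0})$, and let $\O_0$ be an isoperimetric region of volume $t$; by the regularity theory $\p\O_0$ is a constant mean curvature hypersurface, smooth away from a finite set $\{p_1,\dots,p_m\}$, and since $\dim M=8$ the tangent cone at each $p_i$ is, up to a rigid motion, an area minimizing hypercone in $\R^8$ with an isolated singularity --- e.g. the Simons cone ${\bf C}^{3,3}$, which by Hardt--Simon is strictly stable and strictly minimizing. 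If $m=0$ then $g_0\in\cF^k_t$ and there is nothing to prove, so assume $m\geq1$; the goal is to produce, in every $C^k$-neighborhood of $g_0$, a metric admitting a smooth isoperimetric region of volume $t$.

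First I would carry out a \emph{local desingularization} near each $p_i$. Working in $g_0$-normal coordinates on a small ball $B(p_i,2r)$, one perturbs $g_0$ by a $C^k$-small, compactly supported (e.g.\ conformal) modification $g_s=e^{2s\varphi}g_0$ with $\varphi$ supported in $B(p_i,r)$. Exploiting the Hardt--Simon foliation \cite[Theorems 2.1 and 4.4]{hardt1985area} (cf.\ also Wang's foliations \cite{wang2020deformations}) and reversing the mechanism of \cite[Lemma~4]{smale1999singular}, one chooses the perturbation so that \emph{no} minimizing hypercone with isolated singularity is a local area minimizer in $(B(p_i,r),g_s)$: a foliation leaf strictly beats the cone, hence every local homological area minimizer contained in $B(p_i,r/2)$ is smooth. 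The point requiring care is that this must hold \emph{robustly}, for every position and orientation of every such cone inside $B(p_i,r/2)$ once $s\neq0$ is fixed small; this is where a transversality / Sard--Smale argument in the spirit of \cite{smale1993generic}, together with the structure of singular minimizing hypercones in $\R^8$, enters. Performing such perturbations simultaneously near $p_1,\dots,p_m$ with a single $C^k$-small $g_s$ yields a metric, arbitrarily $C^k$-close to $g_0$, no local area minimizer of which is singular inside $\bigcup_{i}B(p_i,r/2)$.

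Second, I would globalize via compactness. As $s\to0$ one has $g_s\to g_0$ in $C^k$; if $\O_s$ is an isoperimetric region of volume $t$ for $g_s$, then the uniform mean curvature bound of Lemma~\ref{Lem: curvature control for isoperimetric regions} together with the perimeter, diameter and component control of Lemma~\ref{Lemma:Perimeter,components,diameter} yields, along a subsequence, $\O_s\to\O_\infty$, an isoperimetric region of volume $t$ for $g_0$, with $\p\O_s\to\p\O_\infty$ as varifolds, smooth graphical convergence away from $Sing(\p\O_\infty)$ (Allard and sheeting), and $Sing(\p\O_s)\to Sing(\p\O_\infty)\subseteq\{p_1,\dots,p_m\}$ in the Hausdorff sense (Simon-type asymptotics for the strictly stable, strictly minimizing tangent cones force singular points of the $\O_s$ to cluster at limiting singular points). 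Hence for $s$ small $Sing(\p\O_s)\subseteq\bigcup_i B(p_i,r/2)$; but $\O_s$ is locally area minimizing, so $\p\O_s$ restricted to each $B(p_i,r/2)$ is a local homological area minimizer for $g_s$, which by the first step is smooth. Therefore $\p\O_s$ is everywhere smooth, i.e.\ $g_s\in\cF^k_t$, and letting $s\to0$ shows $\cF^k_t$ is dense in $\cM^k$.

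\textbf{The main obstacle} is the interaction between the global volume constraint and the localized perturbation: changing $g_0$ near $p_i$ alters the Lagrange multiplier $H$ and, a priori, the entire minimizing configuration, so one must rule out that the perturbed minimizer migrates away from the $p_i$ or develops \emph{new} singularities in an uncontrolled way. This is precisely what the quantitative compactness (Lemmas~\ref{Lem: curvature control for isoperimetric regions} and \ref{Lemma:Perimeter,components,diameter}, cf.\ \cite{chodosh2022riemannian}) together with Simon's blow-up analysis near the singular cones are meant to supply, but making rigorous the Hausdorff convergence of singular sets and the ``no new singularities'' statement in the CMC / volume-constrained category, and establishing the robustness of local non-minimality over all placements of the singular cones, is the technical heart of the argument. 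A secondary point is that, unlike the homological minimizers of \cite{smale1993generic}, isoperimetric regions of a fixed volume need not be unique, so one works with the compact, nonempty family of all volume-$t$ minimizers and only needs one of them to be smooth --- which matches the request for mere density of $\cF^k_t$ rather than residuality.
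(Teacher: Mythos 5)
The statement you are addressing is posed in the paper as an open conjecture in the final section --- the paper offers no proof of it --- so your proposal has to stand as a self-contained argument, and as written it is a strategy outline with genuine gaps rather than a proof (you flag several of them yourself). The most serious one is the interaction between your \emph{localized} perturbation and the non-uniqueness of isoperimetric regions: you modify $g_0$ only near the singular points $p_1,\dots,p_m$ of one chosen minimizer $\O_0$, but a volume-$t$ minimizer $\O_s$ for $g_s$ need only subconverge, as $s\to 0$, to \emph{some} volume-$t$ minimizer $\O_\infty$ for $g_0$, which may be different from $\O_0$ and may carry singular points far from $\bigcup_i B(p_i,r/2)$. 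Your Hausdorff-convergence step then gives no localization of $Sing(\p\O_s)$ inside the perturbed balls, and repairing this would require neutralizing the singular sets of \emph{all} $g_0$-minimizers simultaneously, a family over which you have no a priori control. Your closing remark that ``one only needs one minimizer to be smooth'' does not help here, because the minimizers of the perturbed metric are not selected by you.

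The local step is also not established. The claim that a $C^k$-small, compactly supported change of metric makes \emph{every} area-minimizing hypercone with isolated singularity, in every position and orientation, fail to be locally area minimizing in $B(p_i,r/2)$ is essentially the generic-regularity problem itself; Smale's argument in \cite{smale1993generic} perturbs \emph{adapted to a given minimizer}, using the Hardt--Simon foliation on one side of its tangent cone, and yields no such universal statement. Moreover, the tangent cones at singular points of $\p\O_0$ are only known to be area-minimizing hypercones with isolated singularity in $\R^8$: they need not be Simons cones, nor strictly stable and strictly minimizing, so the foliation barriers and Simon-type decay asymptotics you invoke are not available in general. Finally, any local smoothing of $\p\O_s$ (replacing a singular piece by a foliation leaf) changes the enclosed volume, so the comparison must be carried out in the volume-constrained class, where ``a leaf strictly beats the cone'' is not directly a competitor argument; the Lagrange-multiplier/volume-compensation issue you mention is left unresolved. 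The proposal assembles plausible ingredients, but none of these steps is closed, and the conjecture remains open.
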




\sloppy
\printbibliography
\end{document}